\newtheorem{theorem}{Theorem}[section]
\newtheorem{corollary}[theorem]{Corollary}
\newtheorem{lemma}[theorem]{Lemma}
\newenvironment{proof}[1][Proof]{\textbf{#1.} }{\ \rule{0.5em}{0.5em}}
\title{Generalization of some results concerning eigenvalues
of a certain class of matrices and some
applications\footnote{This work is supported by the Lebanese
university research grants program}}
\author{ Bassam Mourad \thanks{ corresponding
author.Email:bmourad@ul.edu.lb} \\
{\small Department of Mathematics, Faculty of Science V, Lebanese
University, Nabatieh, Lebanon} }
\begin{document}

\maketitle

\begin{abstract} In this note,  we present a generalization of some results
concerning the spectral properties of a certain class of block
matrices. As applications, we study some of its implications on
nonnegative matrices, doubly stochastic matrices and on graph
theory namely on graph spectra and graph energy.
 \end{abstract}

\paragraph*{keywords.}{\footnotesize  eigenvalues, nonnegative matrices, positive matrices, doubly
stochastic matrices, graph theory, graph spectra, graph energy}

\paragraph*{AMS.}  {\footnotesize
 15A12, 15A18, 15A51, 05C50 }

\section{Introduction}
An $n\times n$ matrix with real entries is said to be
\textit{nonnegative } if all of its entries are nonnegative. A
{\em doubly stochastic} matrix is a nonnegative matrix such that
each row and column sum is equal to 1. The Perron-Frobenius
theorem states that if $A$ is a nonnegative matrix, then it has a
nonnegative eigenvalue $r$ (that is the Perron root) which is
greater than or equal to the modulus of each of the other
eigenvalues, and its corresponding  eigenvector $x$ which is
referred to as the Perron-Frobenius eigenvector of $A$ is also
nonnegative. Furthermore, if $A$ is irreducible then $r$ is
positive and the entries of $x$ are strictly positive. In
particular, it is well-known that if $A$ is an $n\times n$ doubly
stochastic matrix then $r=1$ and the corresponding eigenvector is
the column vector $x=e_{n}=\frac{1}{\sqrt{n}}(1,1,...,1)^T\in
\mathbb{R}^n$ where $\mathbb{R}$ denotes the real line. Throughout
this paper, the identity matrix of order $n$ will be denoted by
$I_n.$

A staggering number of papers concerning eigenvalues of
nonnegative and positive matrices as well as doubly stochastic
matrices have appeared (see~\cite{ba,min,pe,sen}). A frequently used auxiliary result appears
in Fiedler  \cite{fi} where the author proves the following
powerful lemma which has been repeatedly used in many situations
particularly in the study of the
nonnegative inverse eigenvalue problem as well as in graph theory (see~\cite{iv,mar}).
\begin{lemma}(\cite{fi})
Let $A$ be an $m\times m$ symmetric matrix with eigenvalues
$\lambda_1,$ $\lambda_2,$... $\lambda_m,$ and let $u$ be the unit
eigenvector corresponding to $\lambda_1.$ Let $B$ be an $n\times
n$ symmetric matrix with eigenvalues $\mu_1,\mu_2,... ,\mu_n$ and
let $v$ be the unit eigenvector corresponding to $\mu_1.$ Then for
any $\rho $, the matrix $ C=\left(
\begin{array}{cc}
A & \rho u v^T \\
\rho v u^T& B \\
\end{array}
\right)$ has eigenvalues $\lambda_2,$... $\lambda_m,$ $\mu_2,$...,
$\mu_n$ and $\gamma_1,$ $\gamma_2$ where $\gamma_1,$ $\gamma_2$
are the eigenvalues of the matrix $\left(
\begin{array}{cc}
\lambda_1 & \rho  \\
\rho & \mu_1 \\
\end{array}
\right).$
\end{lemma}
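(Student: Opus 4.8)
The plan is to exploit the spectral theorem for symmetric matrices and reduce $C$ to block-diagonal form by an orthogonal similarity. Since $A$ and $B$ are symmetric, I would write $A = U \Lambda U^T$ and $B = V M V^T$, where $U = (u_1, \ldots, u_m)$ and $V = (v_1, \ldots, v_n)$ are orthogonal matrices whose columns are orthonormal eigenvectors, $\Lambda = \mathrm{diag}(\lambda_1, \ldots, \lambda_m)$, $M = \mathrm{diag}(\mu_1, \ldots, \mu_n)$, and with $u_1 = u$ and $v_1 = v$ the prescribed unit eigenvectors for $\lambda_1$ and $\mu_1$. Then I set $P = \begin{pmatrix} U & 0 \\ 0 & V \end{pmatrix}$, which is orthogonal of order $m+n$, and the whole argument hinges on computing $P^T C P$.

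The key computation is the transformation of the off-diagonal block. The diagonal blocks become $U^T A U = \Lambda$ and $V^T B V = M$. For the coupling block I would use that $u = u_1$ is the first column of $U$, so $U^T u = e_1$, the first standard basis vector of $\mathbb{R}^m$, and likewise $V^T v = e_1 \in \mathbb{R}^n$. Hence $U^T(\rho u v^T)V = \rho (U^T u)(V^T v)^T = \rho\, e_1 e_1^T$, the matrix with the single nonzero entry $\rho$ in its top-left corner. Therefore $P^T C P = \begin{pmatrix} \Lambda & \rho e_1 e_1^T \\ \rho e_1 e_1^T & M \end{pmatrix}$, whose only off-diagonal coupling links coordinate $1$ of the first block to coordinate $1$ of the second.

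Since similar matrices share the same spectrum, it remains to read off the eigenvalues of this sparse matrix. I would apply a symmetric permutation moving the two coupled coordinates (rows and columns $1$ and $m+1$) adjacent to one another; this conjugation renders the matrix block-diagonal, with one $2\times 2$ block $\begin{pmatrix} \lambda_1 & \rho \\ \rho & \mu_1 \end{pmatrix}$ and all remaining scalars $\lambda_2, \ldots, \lambda_m, \mu_2, \ldots, \mu_n$ sitting alone on the diagonal. The eigenvalues of the $2\times 2$ block are by definition $\gamma_1, \gamma_2$, while each isolated diagonal scalar contributes itself, producing exactly the claimed list.

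I expect the only point requiring care to be the bookkeeping observation that $u = u_1$ forces $U^T u = e_1$; everything else follows routinely from orthogonality. An equivalent, more hands-on route avoids $P$ entirely: verify directly that $(u_i,0)^T$ is an eigenvector of $C$ for $\lambda_i$ when $i\ge 2$ (since $u^T u_i = 0$), that $(0,v_j)^T$ is an eigenvector for $\mu_j$ when $j\ge 2$, and that the two-dimensional span of $(u_1,0)^T$ and $(0,v_1)^T$ is $C$-invariant with $C$ acting there exactly as $\begin{pmatrix} \lambda_1 & \rho \\ \rho & \mu_1 \end{pmatrix}$; the mutual orthogonality of these $m+n$ vectors then guarantees that no eigenvalue is omitted or double-counted, so multiplicities come out correctly even when some $\lambda_i$ or $\mu_j$ coincide.
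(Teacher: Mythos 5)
Your argument is correct, but it is not the route the paper takes. You prove the lemma by orthogonally diagonalizing $A$ and $B$ separately, conjugating $C$ by $P=\begin{pmatrix} U & 0\\ 0 & V\end{pmatrix}$ to get $\begin{pmatrix}\Lambda & \rho e_1e_1^T\\ \rho e_1e_1^T & M\end{pmatrix}$, and then decoupling by a permutation; this is essentially the classical proof, and it hinges on the spectral theorem supplying a \emph{complete} orthonormal eigenbasis for each block (your second, ``hands-on'' variant makes this dependence explicit: you exhibit $m+n$ mutually orthogonal eigenvectors of $C$ and use their completeness to certify the multiplicities). The paper deliberately avoids this mechanism: it derives the lemma (as the case $k=2$, $\rho_{11}=\rho_{22}=0$ of its Theorem 2.5/Corollary 2.4) from Rado's theorem (Theorem 2.1), by viewing $C$ as the rank-two update $(A\oplus B)+XC_0$ where the columns of $X$ are the padded eigenvectors $(u^T,0)^T$ and $(0,v^T)^T$; Rado's theorem then replaces $\lambda_1,\mu_1$ by the eigenvalues of $\mathrm{diag}(\lambda_1,\mu_1)+C_0X=\begin{pmatrix}\lambda_1 & \rho\\ \rho & \mu_1\end{pmatrix}$ while leaving the remaining eigenvalues untouched, with no reference to the other eigenvectors at all. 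What your approach buys is a self-contained, elementary proof using nothing beyond the spectral theorem; what the paper's approach buys is exactly the generalization it is after, since the Rado argument never uses symmetry or diagonalizability and therefore extends verbatim to arbitrary (possibly defective) square blocks, which is the whole point of Section 2.
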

In~\cite{iv}, the authors generalize the above lemma as follows.
For $j = 1, 2, . . . ,k,$ let $A_j$ be $n_j\times n_j$ symmetric
matrices, with corresponding eigenpairs $(\alpha_{ij} , u_{ij}),$
$i = 1, . . . , n_j.$ Also, for $p = 1, 2, . . . , k-1,$ let
$\rho_p$ be arbitrary constants. In addition,  define the
following tridiagonal by blocks matrix:
 \begin{eqnarray} C=\left(
\begin{array}{ccccc}
A_1                    &  \rho_1 u_{11}u_{12}^T  &       &  &  \\
\rho_1  u_{12}u_{11}^T &    A_2       & \ddots  &\ &  \\
                       &    \ddots              &\ddots     & \ddots &  \\
                       &                         &  &  A_{k-1} & \rho_{k-1} u_{1 k-1}u_{1k}^T \\
                       &                         &  &  \rho_{k-1} u_{1 k}u_{1 k-1}^T  &  A_k\\

\end{array}
\right),
\end{eqnarray}

 \begin{eqnarray} \mbox{ and the tridiagonal matrix }  \widehat{C}=\left(
\begin{array}{ccccc}
\alpha_{11} &  \rho_1            &        &  & \\
\rho_1      &  \alpha_{12}       & \ddots  &  \\
                       &    \ddots              &\ddots     &  &  \\
                       &                         &  &  \alpha_{1 k-1} & \rho_{k-1}  \\
                       &                         &  &  \rho_{k-1}     &  \alpha_{1 k}\\

\end{array}
\right).
\end{eqnarray}
Then they prove the following lemma and use it for an application
in graph theory (see Theorem 5.7 below).
\begin{lemma}(\cite{iv})
 For $j = 1, 2, . . . ,k,$ let $A_j$ be $n_j\times n_j$
symmetric matrices, with corresponding eigenpairs $(\alpha_{ij} ,
u_{ij}),$ $i = 1, . . . , n_j.$ Also, for $p = 1, 2, . . . , k-1,$
let $\rho_p$ be arbitrary constants. Furthermore, suppose that for
each $j$ the system of eigenvectors $u_{ij} , i = 1, . . . , n_j,$
is orthonormal. Also, for $p = 1, 2, . . . , k-1,$ let $\rho_p$ be
arbitrary constants. Then, for any $\rho_1,\rho_2,...,\rho_k,$ the
matrix C in $(1)$ has eigenvalues
$$\alpha_{21}, \alpha_{21}, . . . , \alpha_{n_11}, \alpha_{22},
\alpha_{32}, . . . , \alpha_{n_22}, .. . , \alpha_{2k},
\alpha_{3k}, . . . , \alpha_{n_kk}, \gamma_1, \gamma_2, . . . ,
\gamma_k$$ where $\gamma_1, \gamma_2, . . . , \gamma_k$ are the
eigenvalues of the matrix $ \widehat{C}$ in $(2).$
\end{lemma}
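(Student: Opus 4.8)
The plan is to exhibit an orthonormal eigenbasis for $C$ directly. The key observation is that the block matrix $C$ decomposes neatly if we separate, within each diagonal block $A_j$, the Perron-type direction $u_{1j}$ (the eigenvector appearing in the off-diagonal coupling terms) from the remaining eigenvectors $u_{2j}, \dots, u_{n_jj}$. So first I would consider, for each fixed $j$ and each $i \geq 2$, the vector $w_{ij} \in \mathbb{R}^{n_1 + \cdots + n_k}$ obtained by placing $u_{ij}$ in the $j$-th block position and zeros elsewhere. I claim each such $w_{ij}$ is an eigenvector of $C$ with eigenvalue $\alpha_{ij}$. The verification is a block computation: in the $j$-th block row, $A_j u_{ij} = \alpha_{ij} u_{ij}$; the coupling blocks $\rho_{j-1} u_{1,j-1} u_{1j}^T$ and $\rho_j u_{1,j+1} u_{1j}^T$ act on $u_{ij}$ through the inner product $u_{1j}^T u_{ij}$, which vanishes by orthonormality since $i \geq 2$. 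Thus those off-diagonal contributions die, and every neighboring block row receives zero, confirming $C w_{ij} = \alpha_{ij} w_{ij}$. This accounts for $(n_1 - 1) + \cdots + (n_k - 1)$ of the eigenvalues, exactly the list $\alpha_{2j}, \dots, \alpha_{n_jj}$ over all $j$.

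Next I would handle the remaining $k$ eigenvalues by restricting $C$ to the subspace $V$ spanned by the $k$ vectors $w_{1j}$ (with $u_{1j}$ in block $j$, zeros elsewhere). The point is that $V$ is invariant under $C$: when $C$ acts on $w_{1j}$, the diagonal block gives $\alpha_{1j} w_{1j}$, and the off-diagonal coupling blocks produce $\rho_{j-1} u_{1,j-1}(u_{1j}^T u_{1j}) = \rho_{j-1} u_{1,j-1}$ and $\rho_j u_{1,j+1}$ in the adjacent block positions, i.e.\ multiples of $w_{1,j-1}$ and $w_{1,j+1}$. Hence $C w_{1j} = \rho_{j-1} w_{1,j-1} + \alpha_{1j} w_{1j} + \rho_j w_{1,j+1}$, so with respect to the orthonormal basis $\{w_{11}, \dots, w_{1k}\}$ of $V$ the matrix of $C|_V$ is precisely the tridiagonal matrix $\widehat{C}$ in $(2)$. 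Therefore the eigenvalues of $C|_V$ are exactly $\gamma_1, \dots, \gamma_k$, the eigenvalues of $\widehat{C}$.

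Finally I would assemble the pieces. The full collection $\{w_{ij} : 1 \leq j \leq k,\ 2 \leq i \leq n_j\} \cup \{w_{1j} : 1 \leq j \leq k\}$ is an orthonormal set of $n_1 + \cdots + n_k$ vectors in $\mathbb{R}^{n_1 + \cdots + n_k}$ (orthonormality across different blocks is automatic from the disjoint supports, and within each block from the assumed orthonormality of the $u_{ij}$), hence an orthonormal basis. Since $C$ is symmetric and we have decomposed the whole space as the orthogonal direct sum of the one-dimensional invariant spaces $\mathbb{R}w_{ij}$ (for $i \geq 2$) and the invariant subspace $V$, the spectrum of $C$ is the union of the eigenvalues $\alpha_{ij}$ ($i \geq 2$) together with the spectrum of $C|_V = \widehat{C}$, counted with multiplicity. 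This yields exactly the asserted list.

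I expect the only real subtlety to be bookkeeping at the boundary blocks $j = 1$ and $j = k$, where one of the two coupling terms is simply absent; the argument goes through verbatim once one adopts the convention that $\rho_0 = \rho_k = 0$ (equivalently, that $w_{1,0}$ and $w_{1,k+1}$ do not exist). The conceptual heart is the observation that orthogonality of $u_{ij}$ to $u_{1j}$ for $i \geq 2$ is exactly what decouples the ``higher'' eigenvectors from the coupling, and this is where the orthonormality hypothesis—absent from Fiedler's original two-block lemma in the stated generality—is genuinely used.
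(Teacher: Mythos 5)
Your proof is correct, but it follows a genuinely different route from the paper's. You argue directly by decomposing $\mathbb{R}^{n_1+\cdots+n_k}$ into $C$-invariant subspaces: the one-dimensional spans of the embedded vectors $w_{ij}$ with $i\ge 2$, on which the coupling blocks annihilate everything because $u_{1j}^Tu_{ij}=0$, together with the $k$-dimensional subspace $V$ spanned by the $w_{1j}$, on which $C$ acts exactly as the tridiagonal matrix $\widehat{C}$; since these pieces are mutually orthogonal and span the whole space, the spectrum is the union you claim. This is essentially the strategy of the original source \cite{iv} (and of Fiedler's two-block argument), and it has the virtue of being self-contained and of producing the eigenvectors of $C$ explicitly. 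The paper instead obtains the lemma as the special case $\rho_{pp}=0$ and $\rho_{pq}=0$ for $|p-q|>1$ of its Corollary 2.4, whose parent Theorem 2.3 is proved by writing the block matrix as the direct sum $A_1\oplus\cdots\oplus A_k$ plus a rank-at-most-$k$ perturbation of the form covered by Rado's theorem (Theorem 2.1). The payoff of that route is precisely the point you raise in your closing paragraph: the orthonormality of the full systems $u_{ij}$, $i=1,\dots,n_j$ (and the symmetry of the $A_j$) is ``genuinely used'' only by your method of proof, not by the result itself --- Rado's theorem needs only that each $u_{1j}$ is a unit eigenvector of $A_j$, so the conclusion persists for arbitrary, possibly non-diagonalizable blocks. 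Your argument is a valid and more concrete proof of the lemma as stated, but it does not extend to that generality.
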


 It is worth noting here that the proofs of the above two lemmas depend
on the fact that the symmetric matrix $C$  has a
complete set of eigenvectors. This
last fact can be dropped  as we shall prove below after presenting
a generalization of the above results by using simpler techniques.
Therefore, our generalization is valid for all square matrices and
not just the symmetric ones. In addition, this generalization is
 particularly useful in applications since finding the
eigenvalues of certain large order matrices depends on computing
the eigenvalues of particular lower order ones as we shall see
below.

This paper is organized as follows. In section 2, we present a
generalization of the above results. Sections 3 and 4 respectively deal with
some applications of this generalization on nonnegative and doubly
stochastic matrices. The last section is concerned with some
applications in graph theory.

\section{Main observations}

We start this section by presenting some auxiliary results that we
are going to use later. The first one is
presented in  Perfect~\cite{pe} and is due to R. Rado.
\begin{theorem}(\cite{pe})\label{th:1} Let $A$ be any $n\times n$ matrix with
eigenvalues $\lambda_1,...,\lambda_n.$ Let $X_1,X_2,...,X_r$ be
$r$ eigenvectors of $A$ corresponding respectively to the
eigenvalues $\lambda_1,...,\lambda_r$ with $r\leq n$ and let
$X=[X_1|X_2|...|X_r]$ be the $n\times r$ matrix whose columns are
$X_1,X_2,...,X_r.$ Then for any $r\times n$  matrix $C,$ the
matrix $A+XC$ has eigenvalues
$\gamma_1,...,\gamma_r,\lambda_{r+1},...,\lambda_n$ where
$\gamma_1,...,\gamma_r$ are the eigenvalues of the matrix
$\Lambda+CX$ where $\Lambda=diagonal(\lambda_1,...,\lambda_r).$
\end{theorem}
In~\cite{sot} the authors presented the following so-called
symmetric version of the above theorem.
\begin{theorem}(\cite{sot})\label{th:2} Let $A$ be any $n\times n$ symmetric matrix with
eigenvalues $\lambda_1,...,\lambda_n.$ Let $\{X_1,X_2,...,X_r\}$
be an orthonormal set eigenvectors of $A$ corresponding
respectively to the eigenvalues $\lambda_1,...,\lambda_r$ with
$r\leq n$ and let $X=[X_1|X_2|...|X_r]$ be the $n\times r$ matrix
whose columns are $X_1,X_2,...,X_r.$ Then for any $r\times r$
symmetric matrix $Y,$ the symmetric matrix $A+XYX^T$ has
eigenvalues $\gamma_1,...,\gamma_r,\lambda_{r+1},...,\lambda_n$
where $\gamma_1,...,\gamma_r$ are the eigenvalues of the matrix
$\Lambda+Y$ where $\Lambda=diagonal(\lambda_1,...,\lambda_r).$
\end{theorem}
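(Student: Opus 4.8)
The plan is to derive Theorem~\ref{th:2} as an immediate special case of Rado's theorem (Theorem~\ref{th:1}), by choosing the auxiliary matrix $C$ there to be $C=YX^T$. The whole point is the algebraic identity $XYX^T = X(YX^T)$, which lets us recognize the symmetric perturbation $A+XYX^T$ as an instance of the general perturbation $A+XC$ to which Theorem~\ref{th:1} already applies.

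First I would record the trivial but necessary consistency check that $A+XYX^T$ really is symmetric: since $A=A^T$ and $Y=Y^T$, we have $(XYX^T)^T = X Y^T X^T = XYX^T$, so the conclusion about a real spectrum is meaningful. Next, I would set $C=YX^T$. Because $Y$ is $r\times r$ and $X^T$ is $r\times n$, this $C$ is an $r\times n$ matrix, exactly the shape required by Theorem~\ref{th:1}, and it satisfies $XC = X(YX^T) = XYX^T$. Hence $A+XYX^T = A+XC$, and Theorem~\ref{th:1} tells us directly that this matrix has eigenvalues $\gamma_1,\dots,\gamma_r,\lambda_{r+1},\dots,\lambda_n$, where $\gamma_1,\dots,\gamma_r$ are the eigenvalues of $\Lambda+CX$.

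The last step is to simplify $\Lambda+CX$, and this is the only place the orthonormality hypothesis is used. Since $\{X_1,\dots,X_r\}$ is orthonormal, the Gram matrix satisfies $X^TX = I_r$, so $CX = Y X^T X = Y$, and therefore $\Lambda+CX = \Lambda+Y$. Substituting this back into the conclusion of Theorem~\ref{th:1} reproduces exactly the claimed spectrum $\gamma_1,\dots,\gamma_r,\lambda_{r+1},\dots,\lambda_n$ with $\gamma_1,\dots,\gamma_r$ the eigenvalues of $\Lambda+Y$, completing the argument.

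I do not anticipate any genuine obstacle: the entire content is the recognition that the symmetric version is Rado's theorem under the substitution $C=YX^T$, with the orthonormality assumption precisely engineered so that $X^TX=I_r$ collapses $CX$ to $Y$. The only matters requiring care are bookkeeping ones, namely keeping the factor order and the dimensions straight ($Y$ is $r\times r$, $X^T$ is $r\times n$, so $C$ is $r\times n$), and noting that Theorem~\ref{th:1} is stated for an arbitrary $n\times n$ matrix $A$, so it certainly covers the symmetric $A$ at hand.
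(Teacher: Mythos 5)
Your proposal is correct and coincides exactly with the paper's own argument: both deduce Theorem~\ref{th:2} from Rado's theorem by setting $C=YX^T$, so that $XC=XYX^T$ and orthonormality gives $\Lambda+CX=\Lambda+YX^TX=\Lambda+Y$. No further comment is needed.
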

Although the authors presented a detailed proof of this result, we notice
 that the preceding theorem can be considered as a special case of Rado's results. Indeed, for any $r\times r$ matrix $Y$ let
 $C=C_Y=YX^T$ in Theorem \ref{th:1}, then  the matrix $A+XC_Y=A+XYX^T$ has eigenvalues
$\gamma_1,...,\gamma_r,\lambda_{r+1},...,\lambda_n$ where
$\gamma_1,...,\gamma_r$ are the eigenvalues of the matrix
$\Lambda+C_YX=\Lambda+YX^TX=\Lambda+YI_r=\Lambda+Y.$

Next we shall use Rado's result to prove a generalization of the
results in the previous section. In particular, we conclude that
the preceding lemmas are easy consequences of Theorem
\ref{th:1}. In addition, we prove that our generalization is
 valid for all matrices not just for the symmetric ones and,
in particular, it is valid for matrices that do not have complete
sets of eigenvectors i.e. non-diagonalizable matrices. Indeed, for
$j = 1, 2, . . . ,k,$ let $A_j$ be any $n_j\times n_j$ matrix with
corresponding eigenvalues
$\lambda_{1j},\lambda_{2j},...,\lambda_{n_jj}.$  For each $j = 1,
2, . . . ,k,$ let $u_j$ be the eigenvector of $A_j$ corresponding
to the eigenvalue $\lambda_{1j}$ with $ \left\Vert
u_{j}\right\Vert =1 .$  Also, for $p = 1, 2, . . . , k$ and $q =
1,2, . . . , k$ let $\rho_{pq}$ be arbitrary constants. In
addition, define the following two  matrices:
\begin{eqnarray} B=\left(
\begin{array}{ccccc}
\qquad A_1+\rho_{11} u_{1}u_{1}^T  & \qquad  \rho_{12} u_{1}u_{2}^T  & \ldots    &  \rho_{1k} u_{1}u_{k}^T \\
\qquad \rho_{21}  u_{2}u_{1}^T &    A_2 +\rho_{22} u_{2}u_{2}^T     & \ldots  &  \rho_{2k} u_{2}u_{k}^T \\
\rho_{31}  u_{3}u_{1}^T &  \rho_{32} u_{3}u_{2}^T      & \ddots& \vdots \\
          \vdots                & \qquad \ddots     & \ddots& \vdots \\
 \rho_{(k-1)1} u_{k-1}u_{1}^T   & \ldots       &  \ldots  & \rho_{(k-1)k} u_{k-1}u_{k}^T  \\
 \rho_{k1} u_{k}u_{1}^T         & \ldots        &  \rho_{k(k-1)} u_{k}u_{k-1}^T  &  A_k+\rho_{kk} u_{k}u_{k}^T \\
\end{array}
\right)
\end{eqnarray}

\begin{eqnarray} \mbox{ and  }   \widehat{B}= \left(
\begin{array}{ccccc}
\lambda_{11}+\rho_{11}  & \qquad \rho_{12}   & \ldots    &  \rho_{1k} \\
 \rho_{21}             &    \lambda_{12} +\rho_{22} & \ldots  &  \rho_{2k} \\
\rho_{31}             & \rho_{32}                  & \ddots& \vdots \\
          \vdots      & \qquad \ddots     & \ddots& \vdots \\
 \rho_{(k-1)1}    & \ldots       &\qquad  \lambda_{1k-1}+\rho_{k-1k-1}  & \rho_{(k-1)k}   \\
 \rho_{k1}          & \ldots        &  \rho_{k(k-1)}  &  \lambda_{1k}+\rho_{kk} \\
\end{array}
\right)
\end{eqnarray}
 The first result of this paper is the following theorem for
which its corresponding symmetric version generalizes the results
of the previous section.
\begin{theorem}
For $j = 1, 2, . . . ,k,$ let $A_j$ be $n_j\times n_j$  matrices
with corresponding eigenvalues
$\lambda_{1j},\lambda_{2j},...,\lambda_{n_jj}$ counted with their
multiplicities. Suppose that for each $j = 1, 2, . . . ,k,$ the
vector  $u_j$ is the eigenvector of $A_j$ corresponding to the
eigenvalue $\lambda_{1j}$ with $ \left\Vert u_{j}\right\Vert =1 .$
Then, for any $\rho_{pq}$ where $1\leq p,q\leq k,$  the matrix $B$
in $(3)$ has eigenvalues
$$\lambda_{21}, \lambda_{31}, . . . , \lambda_{n_11},
\lambda_{22}, \lambda_{32}, . . . , \lambda_{n_22}, .. . ,
\lambda_{2k}, \lambda_{3k}, . . . , \lambda_{n_kk}, \gamma_1,
\gamma_2, . . . , \gamma_k$$ where $\gamma_1, \gamma_2, . . . ,
\gamma_k$ are the eigenvalues of the matrix $ \widehat{B}$ in
$(4).$
\end{theorem}
\begin{proof} First let $n=n_1+n_2+...+n_k$ and also let $0_{n_j}$ be the zero
 column of order $n_j$ whose all components are zeroes for all $j=1, 2, . . .
,k.$ The key point here is to notice that  the $n\times 1$
column vectors $X_1=(u_1^T, 0_{n_2}^T,..., 0_{n_k}^T)^T,$
$X_2=(0_{n_1}^T,u_2^T,0_{n_3}^T,..., 0_{n_k}^T)^T,
...,X_k=(0_{n_1}^T,...,0_{n_{k-1}}^T, u_k^T)^T$ are $k$
eigenvectors of the $n\times n$ matrix $A_1\oplus
A_2\oplus...\oplus A_k.$ Applying Theorem 2.1 with
$X=[X_1|X_2|...|X_k]$ as the $n\times k$ matrix whose columns are
$X_1,X_2,...,X_k,$ and $A$ as the direct sum
given by  $A=A_1\oplus A_2\oplus...\oplus A_k,$ and
with $C$ as the $k\times n$ matrix whose $k$ rows $r_1,$ ...,
$r_k$ are given by the following vectors
$r_1=(\rho_{11}u_{1}^T,\rho_{12}u_{2}^T,...,\rho_{1k}u_{k}^T),
r_2=(\rho_{21}u_{1}^T,\rho_{22}u_{2}^T,\rho_{23}u_{3}^T,...,\rho_{2k}u_{k}^T),...,$
and  $
r_k=(\rho_{k1}u_{1}^T,...,\rho_{k(k-1)}u_{k-1}^T,\rho_{kk}u_{k}^T).$
 Then in this case, a direct verification shows that the matrix $A+XC$ is
equal to the matrix $B$ given in $(3).$  Moreover it can be easily
verified that the matrix
$diagonal(\lambda_{11},...,\lambda_{1k})+CX$ is equal to the
matrix $\widehat{B}$ given in $(4)$  and the proof is complete.
\qquad
\end{proof}\\
As a consequence, we have the following corollary.
\begin{corollary}
For $j = 1, 2, . . . ,k,$ let $A_j$ be $n_j\times n_j$ symmetric
matrices with corresponding eigenvalues
$\lambda_{1j},\lambda_{2j},...,\lambda_{n_jj}$ counted with their
multiplicities. Suppose that for each $j = 1, 2, . . . ,k,$ $u_j$ is
the eigenvector of $A_j$ corresponding to the eigenvalue
$\lambda_{1j}$ with $ \left\Vert u_{j}\right\Vert =1 .$  Then, for
any $\rho_{pq}$ where $1\leq p,q\leq k$ and by taking
$\rho_{qp}=\rho_{pq},$ then the symmetric matrix $B$ in $(3)$ has
eigenvalues
$$\lambda_{21}, \lambda_{31}, . . . , \lambda_{n_11},
\lambda_{22}, \lambda_{32}, . . . , \lambda_{n_22}, .. . ,
\lambda_{2k}, \lambda_{3k}, . . . , \lambda_{n_kk}, \gamma_1,
\gamma_2, . . . , \gamma_k$$ where $\gamma_1, \gamma_2, . . . ,
\gamma_k$ are the eigenvalues of the matrix $ \widehat{B}$ in
$(4).$
\end{corollary}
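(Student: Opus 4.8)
The plan is to obtain this corollary as an immediate specialization of the preceding theorem, since that theorem was proved for \emph{arbitrary} square matrices $A_j$ and \emph{arbitrary} constants $\rho_{pq}$, and symmetric matrices are in particular square. First I would check that the hypotheses transfer: each symmetric $A_j$ has real eigenvalues $\lambda_{1j},\lambda_{2j},\ldots,\lambda_{n_jj}$ counted with multiplicity, and a unit eigenvector $u_j$ associated to $\lambda_{1j}$, so the theorem applies verbatim. The eigenvalue conclusion — that $B$ has the listed eigenvalues $\lambda_{2j},\ldots,\lambda_{n_jj}$ for each $j$ together with the eigenvalues $\gamma_1,\ldots,\gamma_k$ of $\widehat{B}$ — then follows directly, with no further work required.

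The only content genuinely specific to the symmetric setting is the assertion that $B$ is itself symmetric, and I would verify this block by block. The diagonal block in position $(j,j)$ is $A_j+\rho_{jj}u_ju_j^T$, which is symmetric because $A_j$ is symmetric and $u_ju_j^T$ is a symmetric rank-one matrix. For the off-diagonal blocks, the block in position $(p,q)$ is $\rho_{pq}u_pu_q^T$, whose transpose is $\rho_{pq}u_qu_p^T$; this must coincide with the block occupying position $(q,p)$, namely $\rho_{qp}u_qu_p^T$. Equality holds precisely because of the imposed hypothesis $\rho_{qp}=\rho_{pq}$, so $B^T=B$.

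There is no serious obstacle in this argument: the corollary is a specialization of the theorem, and the symmetry of $B$ is a routine computation that uses nothing beyond the constraint $\rho_{qp}=\rho_{pq}$ and the symmetry of each $A_j$. The one conceptual remark worth recording is that, under this same constraint, the compressed matrix $\widehat{B}$ in $(4)$ is also symmetric; consequently all the eigenvalues appearing in the conclusion are automatically real, which makes the statement internally consistent and confirms that the $\gamma_i$ are genuine real eigenvalues.
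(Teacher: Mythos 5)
Your proposal is correct and matches the paper's treatment: the paper derives this corollary as an immediate specialization of Theorem 2.3 (stated for arbitrary square matrices), offering no further argument beyond "as a consequence." Your additional block-by-block verification that $\rho_{qp}=\rho_{pq}$ makes $B$ (and $\widehat{B}$) symmetric is a routine check the paper leaves implicit, so the two approaches are essentially identical.
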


Now in order to see how the above corollary generalizes Lemma 1.2,
it suffices to take $\rho_{pq}=0$ for all $|p-q|>1$ and
$\rho_{pp}=0$ for all $1\leq p,q\leq k$  in the matrix $B$ given
by (3) to obtain the matrix $C$ in (1) and then clearly in this case
$\widehat{B}$ becomes $ \widehat{C}.$ Finally, we observe that the
case $k=2$ gives the following simple generalization of Lemma 1.1.
\begin{theorem}
Let $A$ be an $m\times m$ symmetric matrix with eigenvalues
$\lambda_1,$ $\lambda_2,$... $\lambda_m,$ and let $u$ be the unit
eigenvector corresponding to $\lambda_1.$ Let $B$ be an $n\times
n$ symmetric matrix with eigenvalues $\mu_1,\mu_2,... ,\mu_n$ and
let $v$ be the unit eigenvector corresponding to $\mu_1.$ Then for
any $\rho, \rho_{11}, \rho_{22} ,$  $ C=\left(
\begin{array}{cc}
A+ \rho_{11}uu^T& \rho u v^T \\
\rho v u^T& B+\rho_{22}vv^T \\
\end{array}
\right)$ has eigenvalues $\lambda_2,$... $\lambda_m,$ $\mu_2,$...,
$\mu_n$ and $\gamma_1,$ $\gamma_2$ where $\gamma_1,$ $\gamma_2$
are the eigenvalues of $\left(
\begin{array}{cc}
\lambda_1 +\rho_{11}& \rho  \\
\rho & \mu_1+\rho_{22} \\
\end{array}
\right).$
\end{theorem}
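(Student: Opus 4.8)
The plan is to derive this statement as the special case $k=2$ of Corollary 2.4 (equivalently, of Theorem 2.3), since the hypotheses here are exactly the symmetric two-block situation already established. First I would set up the dictionary between the two pieces of notation: take $A_1 = A$ with $n_1 = m$ and eigenvalues $\lambda_{i1} = \lambda_i$ for $i = 1, \ldots, m$, and $A_2 = B$ with $n_2 = n$ and eigenvalues $\lambda_{i2} = \mu_i$ for $i = 1, \ldots, n$; identify the two unit eigenvectors as $u_1 = u$ and $u_2 = v$; and take the coupling constants to be the given $\rho_{11}, \rho_{22}$ together with $\rho_{12} = \rho_{21} = \rho$, so that the symmetry requirement $\rho_{qp} = \rho_{pq}$ of Corollary 2.4 is met.

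Next I would verify that, under this dictionary, the matrix in $(3)$ with $k=2$ becomes precisely the matrix $C$ of the present statement: its $(1,1)$ block is $A_1 + \rho_{11} u_1 u_1^T = A + \rho_{11} uu^T$, its $(2,2)$ block is $A_2 + \rho_{22} u_2 u_2^T = B + \rho_{22} vv^T$, and its off-diagonal blocks are $\rho_{12} u_1 u_2^T = \rho uv^T$ and $\rho_{21} u_2 u_1^T = \rho vu^T$. In the same way the matrix $\widehat{B}$ of $(4)$ collapses to the $2\times 2$ matrix $\left(\begin{array}{cc} \lambda_1 + \rho_{11} & \rho \\ \rho & \mu_1 + \rho_{22} \end{array}\right)$, whose two eigenvalues are exactly the stated $\gamma_1, \gamma_2$.

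With these identifications in place, Corollary 2.4 immediately gives that $C$ has eigenvalues $\lambda_{21}, \ldots, \lambda_{n_11}, \lambda_{22}, \ldots, \lambda_{n_22}, \gamma_1, \gamma_2$, which under the dictionary reads $\lambda_2, \ldots, \lambda_m, \mu_2, \ldots, \mu_n, \gamma_1, \gamma_2$, exactly as asserted. Alternatively, one could give a self-contained argument parallel to the proof of Theorem 2.3: form the direct sum $A \oplus B$, use its two eigenvectors $X_1 = (u^T, 0_n^T)^T$ and $X_2 = (0_m^T, v^T)^T$, and apply Rado's result (Theorem 2.1) with the $2\times(m+n)$ matrix whose rows are $(\rho_{11} u^T, \rho v^T)$ and $(\rho u^T, \rho_{22} v^T)$. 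In either route there is no substantial obstacle: the only thing to check is the block-by-block matching of entries, which is a routine verification, so the real content has already been supplied by the main theorem.
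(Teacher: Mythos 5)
Your proposal is correct and matches the paper's intent exactly: the paper presents this theorem as the immediate specialization of Corollary 2.4 to $k=2$, which is precisely the dictionary you set up, and your block-by-block verification is the only content needed. The alternative direct route via Rado's theorem that you sketch is also just the $k=2$ instance of the paper's proof of Theorem 2.3, so nothing genuinely different is involved.
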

This last generalization with the special case
$\rho_{11}=\rho_{22}=0$ appears in ~\cite{soto}.

\section{Applications to nonnegative matrices}
In this section, we present some applications of both Theorem 2.3
and Corollary 2.4 on nonnegative matrices. First we shall
introduce the following notation. Let $N_n$ be the set of all
$n$-tuples $\lambda=(\lambda_1;\lambda_2,...,\lambda_n)$ where
$\lambda_2,...,\lambda_n$ are considered unordered and such that
there exists an $n\times n$ nonnegative matrix with spectrum
$\lambda$ and Perron eigenvalue $\lambda_1.$

The first conclusion we can draw from Theorem 2.3 is the
following.
\begin{theorem}
For $j = 1, 2, . . . ,k,$ let $A_j$ be $n_j\times n_j$ nonnegative
matrices with corresponding eigenvalues
$\lambda_{1j},\lambda_{2j},...,\lambda_{n_jj}$ counted with their
multiplicities. Suppose that for each $j = 1, 2, . . . ,k,$ $u_j$ is
the Perron-Frobenius eigenvector of $A_j$ corresponding to the
eigenvalue $\lambda_{1j}$ with $ \left\Vert u_{j}\right\Vert =1 .$
Then, for any $\rho_{pq}\geq 0$ where $1\leq p,q\leq k,$  the
matrix $B$ in $(3)$ is nonnegative and has eigenvalues
$$\lambda_{21}, \lambda_{31}, . . . , \lambda_{n_11},
\lambda_{22}, \lambda_{32}, . . . , \lambda_{n_22}, .. . ,
\lambda_{2k}, \lambda_{3k}, . . . , \lambda_{n_kk}, \gamma_1,
\gamma_2, . . . , \gamma_k$$ where $\gamma_1, \gamma_2, . . . ,
\gamma_k$ are the eigenvalues of the nonnegative matrix $
\widehat{B}$ in $(4).$
\end{theorem}
\begin{proof}
Since $u_j$ is a nonnegative vector for all $j = 1, 2, . . . ,k$
and $\rho_{pq}\geq 0$ where $1\leq p,q\leq k$ then the matrices
$\rho_{pq}u_{p}u_{q}^T $ are all nonnegative for all $1\leq
p,q\leq k.$ Since all the $A_j$ are nonnegative, the proof is
complete.
\end{proof}\\

 The symmetric version of the preceding theorem gives the
following theorem for which the proof is similar.
\begin{theorem}
For $j = 1, 2, . . . ,k,$ let $A_j$ be $n_j\times n_j$ symmetric
nonnegative matrices with corresponding eigenvalues
$\lambda_{1j},\lambda_{2j},...,\lambda_{n_jj}$ counted with their
multiplicities. Suppose that for each $j = 1, 2, . . . ,k,$ $u_j$ is
the Perron-Frobenius eigenvector of $A_j$ corresponding to the
eigenvalue $\lambda_{1j}$ with $ \left\Vert u_{j}\right\Vert =1 .$
Then, for any $\rho_{pq}\geq 0$ where $1\leq p,q\leq k$ and such
that $\rho_{pq}=\rho_{qp},$ the symmetric matrix $B$ in $(3)$ is
nonnegative and has eigenvalues
$$\lambda_{21}, \lambda_{31}, . . . , \lambda_{n_11},
\lambda_{22}, \lambda_{32}, . . . , \lambda_{n_22}, .. . ,
\lambda_{2k}, \lambda_{3k}, . . . , \lambda_{n_kk}, \gamma_1,
\gamma_2, . . . , \gamma_k$$ where $\gamma_1, \gamma_2, . . . ,
\gamma_k$ are the eigenvalues of the nonnegative matrix
$\widehat{B}$ in $(4).$
\end{theorem}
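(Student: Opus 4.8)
The plan is to derive Theorem 3.3 directly from its non-symmetric predecessor, Theorem 3.2, since the excerpt explicitly states that the proof is ``similar.'' The two key observations to establish are that the constructed matrix $B$ in $(3)$ remains nonnegative under the hypotheses, and that its spectrum (together with that of $\widehat{B}$) is as claimed. The eigenvalue statement is the easier of the two, since it is already guaranteed by the symmetric version of the main observation, namely Corollary 2.4: by choosing $\rho_{qp}=\rho_{pq}$ we place ourselves exactly in the hypotheses of that corollary, so the eigenvalue conclusion transfers verbatim. The substantive content of Theorem 3.3 over Corollary 2.4 is therefore only the nonnegativity assertion about $B$ and $\widehat{B}$.

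First I would invoke Corollary 2.4 to dispose of the spectral claim. Each $A_j$ is symmetric, so its eigenvectors can be taken orthonormal, and the symmetry condition $\rho_{pq}=\rho_{qp}$ on the blocks makes $B$ in $(3)$ symmetric. Corollary 2.4 then yields that $B$ has eigenvalues $\lambda_{21},\ldots,\lambda_{n_11},\ldots,\lambda_{2k},\ldots,\lambda_{n_kk}$ together with $\gamma_1,\ldots,\gamma_k$, the eigenvalues of $\widehat{B}$. This step requires no new argument beyond checking that the symmetry hypothesis is met.

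Next I would address nonnegativity, mirroring the proof of Theorem 3.2. Since each $A_j$ is symmetric nonnegative and $u_j$ is its Perron-Frobenius eigenvector, $u_j$ is a nonnegative unit vector; hence every rank-one block $\rho_{pq}u_pu_q^T$ has nonnegative entries whenever $\rho_{pq}\geq 0$. Adding such nonnegative rank-one corrections to the nonnegative diagonal blocks $A_j$ keeps every entry of $B$ nonnegative, so $B$ is a symmetric nonnegative matrix. For $\widehat{B}$, the off-diagonal entries are exactly the $\rho_{pq}\geq 0$, and the diagonal entries $\lambda_{1j}+\rho_{jj}$ are sums of the Perron roots $\lambda_{1j}\geq 0$ with the nonnegative $\rho_{jj}$; thus $\widehat{B}$ is also nonnegative.

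I do not anticipate a genuine obstacle here, as the result is an immediate specialization: the only point deserving care is the simultaneous appeal to two distinct features of the hypotheses, namely that the Perron-Frobenius hypothesis supplies both the orthonormality-compatible eigenvector needed for Corollary 2.4 and the entrywise nonnegativity of $u_j$ needed for the nonnegativity of the rank-one blocks. I would therefore state both uses explicitly rather than conflating them, and then conclude that $B$ is a symmetric nonnegative matrix with the asserted spectrum and that $\widehat{B}$ is nonnegative, completing the proof.
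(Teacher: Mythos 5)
Your proposal is correct and follows essentially the same route as the paper: the paper proves the non-symmetric version by checking that the blocks $\rho_{pq}u_pu_q^T$ are entrywise nonnegative (since the Perron--Frobenius eigenvectors $u_j$ are nonnegative) and letting Theorem 2.3 handle the spectrum, then simply declares the symmetric case ``similar,'' which is exactly your appeal to Corollary 2.4 plus the same nonnegativity check. Your only superfluous remark is the mention of a full orthonormal eigenvector system --- Corollary 2.4 needs only the single unit eigenvector $u_j$ per block --- but this does not affect the argument.
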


It is worth mentioning here that if the matrices $A_j$ in the
above two theorems are all positive, then from the
Perron-Frobenius theorem all the eigenvectors
 $u_j$ are positive for all $j = 1, 2, . . . ,k.$  Therefore for all $\rho_{pq}> 0$
 where $1\leq p,q\leq k,$ the matrices $B$ and $ \widehat{B}$ are also positive.
On the other hand, if all the $\rho_{pp}$ are zeroes and all the
matrices $A_j$ are of trace zero then $B$ has zero trace. Thus an obvious application
 for the above two theorems, lies
in the study of the inverse eigenvalue problem for
nonnegative matrices namely in the study of the so-called {\it
extreme nonnegative matrices} (see \cite{la, laf}) where
trace-zero nonnegative matrices are such ones and positive
matrices are not (see also \cite{bo}). More precisely, such extreme points in lower dimensions can be used in the above two theorems to generate extreme points in higher dimension.

 Recall that all the $\rho_{ij}$s in the above two theorems are nonnegative. Therefore  by seeking a convenient choice of the $\rho_{ij}$s
  that makes the matrix $\widehat{B}$ has a special structure such as  diagonal, upper-triangular, or any other form that makes the computations of the eigenvalues of  $\widehat{B}$  possible,  is helpful for obtaining some
   useful consequences of Theorem 3.2. It should be noted that the choice which makes $\widehat{B}$ diagonal, results in well-known conclusions. However,  a choice of the $\rho_{ij}$s that makes the matrix $\widehat{B}$  {\it circulant} for example, gives the following new result.
 \begin{theorem}
For $j = 1, 2, . . . ,k,$ let $(\lambda_{1j}; \lambda_{2j}, . . .
, \lambda_{n_jj})\in N_{n_j}.$ Without loss of
generality, suppose that $\lambda_{11}\geq \underset{j}{\max}
(\lambda_{1j}).$  For any nonnegative reals  $\rho_{11},\rho_{12},...,\rho_{1k},$
 define the polynomial $p(x)=(\lambda_{11}+\rho_{11})+\rho_{12}x+...+\rho_{1k}x^{k-1}.$ Then the $(n_1+...+n_k)$-tuple
$$(\underbrace{p(1);\lambda_{21}, . . . , \lambda_{n_11}},
\underbrace{p(w),\lambda_{22}, . . . ,
\lambda_{n_22}}, .. . , \underbrace{p(w^{k-1}),\lambda_{2k}, . . . , \lambda_{n_kk}})$$ is in $N_{n_1+...+n_k},$ where $w$ is a $k$th primitive root
 of unity i.e. $w^k=1$ and $w^l\neq 1$ for any integer $l$ less than $k.$
\end{theorem}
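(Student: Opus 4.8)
The plan is to apply Theorem 3.2 with a carefully chosen set of coupling constants $\rho_{pq}$ so that the reduced matrix $\widehat{B}$ becomes circulant, at which point its eigenvalues are given explicitly by evaluating the associated polynomial at the $k$th roots of unity. First I would set up the realizing matrices: for each $j=1,\ldots,k$, since $(\lambda_{1j};\lambda_{2j},\ldots,\lambda_{n_jj})\in N_{n_j}$, choose an $n_j\times n_j$ nonnegative matrix $A_j$ with this spectrum and Perron root $\lambda_{1j}$, and let $u_j$ be its unit Perron--Frobenius eigenvector. This is exactly the hypothesis configuration of Theorem 3.2.

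Next I would specify the $\rho_{pq}$ so as to force circulant structure on $\widehat{B}$. A circulant matrix is constant along each diagonal in the cyclic sense, so I would take the entries of $\widehat{B}$ to depend only on $(q-p)\bmod k$. Concretely, writing $c_0=\lambda_{11}+\rho_{11}$ and prescribing the off-diagonal entries by the first row $\rho_{12},\rho_{13},\ldots,\rho_{1k}$, I would choose the remaining $\rho_{pq}$ so that each superdiagonal/cyclic-diagonal of $\widehat{B}$ carries the appropriate constant $\rho_{1,(q-p\bmod k)+1}$. The one subtlety is that the diagonal of $\widehat{B}$ from Theorem 3.2 reads $\lambda_{1j}+\rho_{jj}$, which varies with $j$; to make these all equal to the common circulant diagonal value $\lambda_{11}+\rho_{11}$, I would set $\rho_{jj}=(\lambda_{11}+\rho_{11})-\lambda_{1j}$ for each $j$. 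Because of the normalization $\lambda_{11}\geq\max_j(\lambda_{1j})$ together with $\rho_{11}\geq 0$, these diagonal corrections satisfy $\rho_{jj}\geq 0$, so the nonnegativity hypothesis $\rho_{pq}\geq 0$ of Theorem 3.2 is respected. With this choice $\widehat{B}$ is precisely the circulant matrix with first row $(c_0,\rho_{12},\ldots,\rho_{1k})$.

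I would then invoke the classical fact that a $k\times k$ circulant with first row $(c_0,c_1,\ldots,c_{k-1})$ has eigenvalues $p(\omega^l)=c_0+c_1\omega^l+\cdots+c_{k-1}\omega^{(k-1)l}$ for $l=0,1,\ldots,k-1$, where $\omega$ is a primitive $k$th root of unity. With $c_0=\lambda_{11}+\rho_{11}$ and $c_{m}=\rho_{1,m+1}$, this is exactly $p(\omega^l)$ for the polynomial $p(x)=(\lambda_{11}+\rho_{11})+\rho_{12}x+\cdots+\rho_{1k}x^{k-1}$ defined in the statement, so $\gamma_1,\ldots,\gamma_k=p(1),p(\omega),\ldots,p(\omega^{k-1})$. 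Theorem 3.2 then asserts that the nonnegative matrix $B$ has spectrum consisting of these $\gamma_l$ together with the leftover eigenvalues $\lambda_{2j},\ldots,\lambda_{n_jj}$ of each block, which is exactly the claimed tuple.

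Finally, to conclude membership in $N_{n_1+\cdots+n_k}$ I must check that the Perron structure is correct, i.e.\ that some entry of the listed tuple is the Perron root of $B$ and appears in the designated first position. Here the main obstacle, and the point requiring care, is verifying that $p(1)=(\lambda_{11}+\rho_{11})+\rho_{12}+\cdots+\rho_{1k}$ is the Perron eigenvalue of $B$: since all $\rho_{1m}\geq 0$, $p(1)$ dominates $|p(\omega^l)|$ for each $l$ by the triangle inequality, and it should also dominate the remaining block eigenvalues $\lambda_{ij}$ because each $\lambda_{1j}\leq\lambda_{11}\leq p(1)$ and $|\lambda_{ij}|\leq\lambda_{1j}$ by Perron--Frobenius applied to $A_j$. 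Confirming that $p(1)$ is therefore the Perron root of the nonnegative matrix $B$, and that it is the value sitting in the first slot of the tuple, completes the verification that the tuple lies in $N_{n_1+\cdots+n_k}$.
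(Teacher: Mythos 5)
Your proposal is correct and follows essentially the same route as the paper: realize each spectrum by a nonnegative $A_j$, choose the $\rho_{pq}$ so that $\widehat{B}$ is the circulant with first row $(\lambda_{11}+\rho_{11},\rho_{12},\ldots,\rho_{1k})$, read off its eigenvalues as $p(w^l)$, and apply the block theorem of Section 3. You in fact supply two details the paper leaves implicit --- the explicit diagonal corrections $\rho_{jj}=(\lambda_{11}+\rho_{11})-\lambda_{1j}\geq 0$ (which is exactly where the hypothesis $\lambda_{11}\geq\max_j\lambda_{1j}$ is used) and the triangle-inequality check that $p(1)$ is indeed the Perron root --- both of which are welcome additions.
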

\begin{proof} For $j = 1, 2, . . . ,k,$ let $A_j$ realizes $(\lambda_{1j}; \lambda_{2j}, . . ., \lambda_{n_jj}).$ Next, we choose the nonnegative reals $\rho_{ij}$ for $i=2,...,k$ and $j=1,...,k$ in terms of $\rho_{11},\rho_{12},...,\rho_{1k}$
so that $\widehat{B}$ is a nonnegative circulant with first row $(\lambda_{11}+\rho_{11},\rho_{12},...,\rho_{1k}).$ Moreover, the eigenvalues of the circulant matrix $\widehat{B}$ are well-known and given by $p(w^l)$ for $l=0,1,...,k-1$ (see~\cite{dav}). To complete the proof, we apply Theorem 3.2 with those matrices $A_1, A_2,...,A_k$ and the chosen $\rho_{ij}$  to obtain the nonnegative matrix $B$ that has the required spectrum.
\end{proof}

\section{Applications to doubly stochastic matrices}
 In this section, we study some of the effects of Theorem 2.3 on the spectral properties
  of doubly stochastic matrices which has been the object of study for a long time
  (see \cite{hw,mo,mou,mour,moura,per,re} and the reference therein).

  First let us recall that in ~\cite{mourad}, we used an
  extension of Fiedler's lemma to prove the following.
\begin{theorem}
Let $T_1$ be an $m\times m$ diagonalizable doubly stochastic
matrix with eigenvalues $1,$ $\lambda_2,$... $\lambda_m,$ and let
$T_2$ be an $n\times n$ diagonalizable doubly stochastic matrix
with eigenvalues $1,$ $\mu_2,$... $\mu_n.$  For any nonnegative reals $\alpha$ and $\rho$  that do not
vanish simultaneously, and for $n\geq m,$   the $(m+n)\times (m+n)$
matrix $D$ given by: $ D=\frac{1}{\alpha+\frac{\rho n
}{\sqrt{mn}}}\left(
\begin{array}{cc}
\alpha T_1 & \rho e_m e_n^T \\
\rho e_n e_m^T& (\alpha+\rho\frac{n-m}{\sqrt{mn}})T_2 \\
\end{array}
\right)$ is doubly stochastic with eigenvalues given by: $1,
\frac{\alpha\sqrt{mn}-\rho m}{\alpha\sqrt{mn}+\rho n},
\frac{\alpha}{\alpha+\frac{\rho n }{\sqrt{mn}}}\lambda_2,..,
\frac{\alpha}{\alpha+\frac{\rho n }{\sqrt{mn}}}\lambda_m,
\frac{\alpha\sqrt{mn}+\rho (n-m)}{\alpha\sqrt{mn}+\rho n}\mu_2,..,
\frac{\alpha\sqrt{mn}+\rho (n-m)}{\alpha\sqrt{mn}+\rho n}\mu_n.$
\end{theorem}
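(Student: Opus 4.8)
The plan is to realize the block matrix $D$ as an instance of the construction in Theorem 2.3 (or its symmetric corollary) with $k=2$, and then simply track how the claimed eigenvalues arise. First I would record the two facts that drive everything: since $T_1$ and $T_2$ are doubly stochastic, their Perron eigenvalue is $1$ with unit Perron eigenvector $e_m=\frac{1}{\sqrt m}(1,\dots,1)^T$ and $e_n=\frac{1}{\sqrt n}(1,\dots,1)^T$ respectively, so that $e_me_n^T$ is exactly the rank-one coupling block $u_1u_2^T$ appearing in $(3)$. Thus, up to the scalar normalization $\kappa=\frac{1}{\alpha+\rho n/\sqrt{mn}}$, the matrix $D$ has the block form of $B$ with $A_1=\alpha T_1$, $A_2=(\alpha+\rho\frac{n-m}{\sqrt{mn}})T_2$, off-diagonal couplings $\rho_{12}e_me_n^T$ and $\rho_{21}e_ne_m^T$, and $\rho_{11}=\rho_{22}=0$.

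Next I would verify the two structural claims separately. For \emph{double stochasticity}, the cleanest route is to check directly that $D$ is nonnegative (immediate, since $\alpha,\rho\ge0$ and all blocks are nonnegative) and that $De_{m+n}=e_{m+n}$ together with $e_{m+n}^TD=e_{m+n}^T$, where $e_{m+n}=\frac{1}{\sqrt{m+n}}(1,\dots,1)^T$; the row-sum computation uses $T_1e_m=e_m$, $T_2e_n=e_n$, and the precise value of the normalizing constant $\kappa$ is exactly what forces every row sum to equal $1$. For the \emph{spectrum}, I would apply Theorem 2.3 to the unnormalized matrix $\kappa^{-1}D$. Its non-coupling eigenvalues are the non-Perron eigenvalues of $A_1=\alpha T_1$, namely $\alpha\lambda_2,\dots,\alpha\lambda_m$, and of $A_2$, namely $(\alpha+\rho\frac{n-m}{\sqrt{mn}})\mu_2,\dots,(\alpha+\rho\frac{n-m}{\sqrt{mn}})\mu_n$; multiplying by $\kappa$ yields precisely the scaled $\lambda_i$ and $\mu_i$ entries in the stated list. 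The remaining two eigenvalues $\gamma_1,\gamma_2$ come from the $2\times2$ matrix $\widehat B$ of $(4)$, which here reads $\left(\begin{array}{cc}\alpha & \rho\sqrt{mn}/\sqrt{mn}\\ \cdot & \cdot\end{array}\right)$-type, i.e. a $2\times2$ matrix whose diagonal entries are $\alpha$ and $\alpha+\rho\frac{n-m}{\sqrt{mn}}$ and whose off-diagonal entries are $\rho$ times the relevant inner products $e_n^Te_n=1$ and $e_m^Te_m=1$; its eigenvalues are $1$ (since double stochasticity guarantees $\kappa^{-1}$ is an eigenvalue of $\kappa^{-1}D$) and a second value which, after multiplication by $\kappa$, must simplify to $\frac{\alpha\sqrt{mn}-\rho m}{\alpha\sqrt{mn}+\rho n}$.

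I expect the main obstacle to be purely bookkeeping rather than conceptual: reconciling the scalar normalization with the choices of $\rho_{12},\rho_{21}$ so that $\widehat B$ produces the advertised $\gamma$'s. Concretely, one must choose the coupling constants $\rho_{12},\rho_{21}$ (and the prefactors on $T_1,T_2$) so that after scaling by $\kappa$ the Perron pair of $\widehat B$ collapses to $1$ while the trace of $\widehat B$ fixes the second eigenvalue; the delicate point is that the off-diagonal blocks of $D$ are symmetric ($\rho e_me_n^T$ and $\rho e_ne_m^T$) but the diagonal prefactors are \emph{unequal}, so $\widehat B$ is not symmetric and one cannot invoke the symmetric corollary directly—Theorem 2.3 in its general (non-symmetric) form is exactly what is needed, and its off-diagonal entries $\rho_{12},\rho_{21}$ need not coincide. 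I would therefore solve the two equations ``$\widehat B$ has eigenvalue $\kappa^{-1}$'' and ``$\mathrm{tr}\,\widehat B=\kappa^{-1}+\text{(second eigenvalue)}/\kappa$'' to pin down the constants, then multiply through by $\kappa$ and simplify; the identities $\sqrt{mn}\cdot e_m^Te_m=\sqrt{mn}$ and the algebra $\alpha\sqrt{mn}+\rho n=\kappa^{-1}\sqrt{mn}$ should make the stated closed forms fall out. The only genuine risk is an arithmetic slip in this $2\times2$ reduction, so I would double-check by confirming that the listed eigenvalues sum to $\mathrm{tr}\,D$ and that setting $\rho=0$ recovers the spectrum of $T_1\oplus T_2$.
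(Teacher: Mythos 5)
Your proposal is correct and follows essentially the same route as the paper: the paper proves the stronger Theorem 4.2 (diagonalizability dropped) by applying Theorem 2.3 with $k=2$, $A_1=\alpha T_1$, $A_2=(\alpha+\rho\frac{n-m}{\sqrt{mn}})T_2$, $\rho_{12}=\rho_{21}=\rho$, $\rho_{11}=\rho_{22}=0$, exactly as you do, and your $2\times 2$ bookkeeping for $\widehat{B}$ checks out (trace and determinant match the advertised $\gamma$'s). One small correction: $\widehat{B}=\left(\begin{smallmatrix}\alpha & \rho\\ \rho & \alpha+\rho\frac{n-m}{\sqrt{mn}}\end{smallmatrix}\right)$ is in fact symmetric despite its unequal diagonal entries; the reason the general Theorem 2.3 rather than Corollary 2.4 is needed is only that $T_1$ and $T_2$ themselves need not be symmetric.
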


Now in view of Theorem 2.3, the diagonalizability condition in the
previous theorem can be removed and therefore we have the following
more general result.
\begin{theorem}
Let $T_1$ be an $m\times m$ doubly stochastic matrix with
eigenvalues $1,$ $\lambda_2,$... $\lambda_m,$ and let $T_2$ be an
$n\times n$ doubly stochastic matrix with eigenvalues $1,$
$\mu_2,$... $\mu_n.$  Moreover suppose that $n\geq m$ then for any
$\alpha \geq 0$ and for any $\rho \geq 0$ such that $\alpha$ and
$\rho$ do not vanish simultaneously,  the $(m+n)\times (m+n)$
matrix $D$ defined by $ D=\frac{1}{\alpha+\frac{\rho n
}{\sqrt{mn}}}\left(
\begin{array}{cc}
\alpha T_1 & \rho e_m e_n^T \\
\rho e_n e_m^T& (\alpha+\rho\frac{n-m}{\sqrt{mn}})T_2 \\
\end{array}
\right)$ is doubly stochastic with eigenvalues given by: $1,
\frac{\alpha\sqrt{mn}-\rho m}{\alpha\sqrt{mn}+\rho n},
\frac{\alpha}{\alpha+\frac{\rho n }{\sqrt{mn}}}\lambda_2,..,
\frac{\alpha}{\alpha+\frac{\rho n }{\sqrt{mn}}}\lambda_m,
\frac{\alpha\sqrt{mn}+\rho (n-m)}{\alpha\sqrt{mn}+\rho n}\mu_2,..,
\frac{\alpha\sqrt{mn}+\rho (n-m)}{\alpha\sqrt{mn}+\rho n}\mu_n.$
\end{theorem}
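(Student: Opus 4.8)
The plan is to derive Theorem 4.2 as a direct application of Theorem 2.3 with $k=2$, exploiting the decisive fact that Theorem 2.3 (unlike the Fiedler-type lemma underlying Theorem 4.1) carries no diagonalizability hypothesis. First I would factor out the normalizing scalar and write $D = sM$, where $s = \frac{1}{\alpha + \rho n/\sqrt{mn}}$ and $M$ is the unnormalized block matrix. Note $s>0$, since $\alpha,\rho\geq 0$ are not both zero and $n/\sqrt{mn}>0$. Because the eigenvalues of $D$ are exactly $s$ times those of $M$, it suffices to compute the spectrum of $M$ and then rescale.

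Next I would recognize $M$ as an instance of the matrix $B$ in $(3)$ for $k=2$. Concretely, set $A_1 = \alpha T_1$ and $A_2 = (\alpha + \rho\frac{n-m}{\sqrt{mn}})T_2$, which are nonnegative thanks to $\alpha,\rho\geq 0$ and $n\geq m$. Their Perron eigenvalues are $\lambda_{11}=\alpha$ and $\lambda_{12}=\alpha+\rho\frac{n-m}{\sqrt{mn}}$, with unit Perron eigenvectors $u_1=e_m$ and $u_2=e_n$ (here I use that $T_ie_{\cdot}=e_{\cdot}$). Choosing $\rho_{11}=\rho_{22}=0$ and $\rho_{12}=\rho_{21}=\rho$ turns the off-diagonal blocks into $\rho e_m e_n^T$ and $\rho e_n e_m^T$, so $B$ coincides with $M$. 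Theorem 2.3 then gives that the spectrum of $M$ consists of the scaled non-Perron eigenvalues $\alpha\lambda_2,\ldots,\alpha\lambda_m$ and $(\alpha+\rho\frac{n-m}{\sqrt{mn}})\mu_2,\ldots,(\alpha+\rho\frac{n-m}{\sqrt{mn}})\mu_n$, together with the two eigenvalues $\gamma_1,\gamma_2$ of $\widehat{B}=\left(\begin{array}{cc}\alpha & \rho \\ \rho & \alpha+\rho\frac{n-m}{\sqrt{mn}}\end{array}\right)$.

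The one genuine computation is diagonalizing this $2\times 2$ matrix. I would record its trace $2\alpha+\rho\frac{n-m}{\sqrt{mn}}$ and determinant $\alpha(\alpha+\rho\frac{n-m}{\sqrt{mn}})-\rho^2$, and check that the cross terms cancel so that the discriminant collapses to $\rho^2\frac{(m+n)^2}{mn}$ (using $\frac{(n-m)^2}{mn}+4=\frac{(m+n)^2}{mn}$). Hence $\gamma_1=\alpha+\rho\frac{n}{\sqrt{mn}}$ and $\gamma_2=\alpha-\rho\frac{m}{\sqrt{mn}}$. Multiplying every eigenvalue of $M$ by $s$ then reproduces exactly the list claimed: $s\gamma_1=1$, while $s\gamma_2=\frac{\alpha\sqrt{mn}-\rho m}{\alpha\sqrt{mn}+\rho n}$ and the remaining scaled eigenvalues take the stated shape after clearing $\sqrt{mn}$ in numerator and denominator.

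Finally I would confirm that $D$ is doubly stochastic. Nonnegativity is immediate from $\alpha,\rho\geq 0$, $n\geq m$, and $s>0$. For the row sums, a first-block row of $M$ sums to $\alpha+\rho\frac{n}{\sqrt{mn}}$ (the $\alpha T_1$ part contributes $\alpha$ since $T_1$ has unit row sums, and the $\rho e_m e_n^T$ part contributes $\rho\frac{1}{\sqrt{m}}\cdot\sqrt{n}=\rho\frac{n}{\sqrt{mn}}$), while a second-block row sums to $\rho\frac{m}{\sqrt{mn}}+\alpha+\rho\frac{n-m}{\sqrt{mn}}=\alpha+\rho\frac{n}{\sqrt{mn}}$; multiplying by $s$ makes all row sums equal to $1$. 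Since $M^T$ has the identical block form with the doubly stochastic matrices $T_1^T,T_2^T$ in place of $T_1,T_2$, the same computation gives all column sums equal to $1$. The only thing demanding care is bookkeeping the constant $s$ consistently across the rescaling; there is no substantive obstacle, the whole point being that Theorem 2.3 renders the diagonalizability assumption of Theorem 4.1 superfluous.
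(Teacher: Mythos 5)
Your proposal is correct and follows exactly the route the paper takes: apply Theorem 2.3 with $k=2$, $A_1=\alpha T_1$, $A_2=(\alpha+\rho\tfrac{n-m}{\sqrt{mn}})T_2$, $\rho_{12}=\rho_{21}=\rho$, $\rho_{11}=\rho_{22}=0$, then rescale by the normalizing constant. You simply carry out explicitly the $2\times 2$ eigenvalue computation and the row/column-sum verification that the paper leaves to the reader.
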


\begin{proof} In Theorem 2.3, if we let $k=2,$ $n_1=m$ and  $n_2=n,$ then it suffice to take
 $A_1=\alpha T_1,$ $A_2=(\alpha+\rho\frac{n-m}{\sqrt{mn}})T_2,$  $\rho_{12}=\rho_{21}=\rho$
 and $\rho_{11}=\rho_{22}=0$ in the matrix $B$ of $(3)$ to
 complete the proof.
\end{proof}\\
Clearly in the above theorem if $\rho=0,$ then obviously
$D=T_1\oplus T_2.$ Also if $\rho\neq 0$ and $\alpha \neq 0$ and
$T_1$ and $T_2$ are positive then the matrix $D$ is necessarily
positive. On the other hand, if $A$ and $B$ are of trace-zero or
$\alpha =0$ and $T_2$ is of trace-zero, then $D$ is of trace-zero.
As mentioned earlier for the nonnegative inverse eigenvalue problem and in a similar fashion,
 the preceding two theorems have obvious applications in the study of the inverse eigenvalue
problem for doubly stochastic matrices (see \cite{mourad}).

Another application of Theorem 2.3 with different input gives
another useful result concerning the spectral properties of doubly stochastic matrices.
\begin{theorem}
Let $T_1$ be an $m\times m$ doubly stochastic matrix with
eigenvalues $1,$ $\lambda_2,$... $\lambda_m,$ and let $T_2$ be an
$n\times n$ doubly stochastic matrix with eigenvalues $1,$
$\mu_2,$... $\mu_n$ each counted with their multiplicities.
Without loss of generality, suppose that $n\geq m,$ then for any
nonnegative numbers  $\alpha$ and $\rho,$ the matrix  $
D=\frac{1}{1+\alpha+\frac{\rho n }{\sqrt{mn}}}\left(
\begin{array}{cc}
 T_1 +\alpha e_m e_m^T& \rho e_m e_n^T \\
\rho e_n e_m^T& T_2+(\alpha+\rho\frac{n-m}{\sqrt{mn}})e_n e_n^T \\
\end{array}
\right)$ which is of order $(m+n)$ is doubly stochastic. Moreover, the $(m+n)$ eigenvalues of $D$ are given by:\\ $1,
\frac{(1+\alpha)\sqrt{mn}-\rho m}{(1+\alpha)\sqrt{mn}+\rho n},
\frac{1}{1+\alpha+\frac{\rho n }{\sqrt{mn}}}\lambda_2,..,
\frac{1}{1+\alpha+\frac{\rho n }{\sqrt{mn}}}\lambda_m,
\frac{1}{1+\alpha+\frac{\rho n }{\sqrt{mn}}}\mu_2,..,
\frac{1}{1+\alpha+\frac{\rho n }{\sqrt{mn}}}\mu_n.$
\end{theorem}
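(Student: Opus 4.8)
The plan is to realize the bracketed matrix inside $D$ as an instance of the matrix $B$ in $(3)$ and then invoke Theorem 2.3, exactly as was done for Theorem 4.2. Concretely, I would set $k=2$, $n_1=m$, $n_2=n$, take $A_1=T_1$ and $A_2=T_2$, and use the unit Perron eigenvectors $u_1=e_m$ and $u_2=e_n$; these are genuine right eigenvectors since $T_1,T_2$ are doubly stochastic. The rank-one blocks in $(3)$ are $\rho_{pq}u_pu_q^T$, and because $u_1u_1^T=e_me_m^T$, $u_2u_2^T=e_ne_n^T$, $u_1u_2^T=e_me_n^T$, matching the blocks of $D$ forces $\rho_{11}=\alpha$, $\rho_{12}=\rho_{21}=\rho$, and $\rho_{22}=\alpha+\rho\frac{n-m}{\sqrt{mn}}$. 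With these values the matrix in brackets is precisely the $B$ of $(3)$, so $D=\frac{1}{s}B$ with $s=1+\alpha+\frac{\rho n}{\sqrt{mn}}$.

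Theorem 2.3 then delivers the non-Perron eigenvalues at once: $B$ has eigenvalues $\lambda_2,\ldots,\lambda_m,\mu_2,\ldots,\mu_n$ together with the two eigenvalues $\gamma_1,\gamma_2$ of $\widehat{B}=\bigl(\begin{smallmatrix}1+\alpha & \rho\\ \rho & 1+\alpha+\rho\frac{n-m}{\sqrt{mn}}\end{smallmatrix}\bigr)$. Dividing by $s$ produces the entries $\frac{\lambda_j}{s}$ and $\frac{\mu_j}{s}$ listed in the statement, so only $\gamma_1,\gamma_2$ remain. I would verify directly that $(\sqrt{m},\sqrt{n})^T$ is an eigenvector of $\widehat{B}$ with eigenvalue $s$, giving $\gamma_1=s$ and $\gamma_1/s=1$; the second eigenvalue then follows from the trace as $\gamma_2=\operatorname{tr}\widehat{B}-s=(1+\alpha)-\frac{\rho m}{\sqrt{mn}}$, whence clearing $\sqrt{mn}$ yields $\gamma_2/s=\frac{(1+\alpha)\sqrt{mn}-\rho m}{(1+\alpha)\sqrt{mn}+\rho n}$. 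This reproduces the claimed spectrum exactly.

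For double stochasticity I would treat nonnegativity and the line sums separately. Nonnegativity is where the hypothesis $n\geq m$ is genuinely used: the matrices $T_1,T_2,e_me_m^T,e_ne_n^T,e_me_n^T$ are entrywise nonnegative and $\alpha,\rho\geq 0$, so the only coefficient that could turn negative is $\alpha+\rho\frac{n-m}{\sqrt{mn}}$ in the lower-right block, and $n\geq m$ keeps it nonnegative. For the sums, a short computation using that $T_1,T_2$ and $e_me_m^T,e_ne_n^T$ have all line sums equal to $1$, while $e_me_n^T$ (resp. $e_ne_m^T$) has row sums $\sqrt{n/m}$ and column sums $\sqrt{m/n}$ (resp. the reverse), shows that every row sum and every column sum of $B$ equals $s$; dividing by $s$ makes $D$ doubly stochastic.

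I expect no serious obstacle, since the argument is a parameter substitution into Theorem 2.3 followed by bookkeeping. The only point demanding care, and the single place where $n\geq m$ actually enters, is confirming that the bottom-right coefficient $\alpha+\rho\frac{n-m}{\sqrt{mn}}$ remains nonnegative: this is what simultaneously guarantees the nonnegativity of $D$ and makes the row and column sums collapse to the common value $s$.
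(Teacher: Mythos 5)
Your proposal is correct and follows the paper's own proof exactly: the paper likewise applies Theorem 2.3 with $k=2$, $A_1=T_1$, $A_2=T_2$, $\rho_{11}=\alpha$, $\rho_{12}=\rho_{21}=\rho$, $\rho_{22}=\alpha+\rho\frac{n-m}{\sqrt{mn}}$, and verifies double stochasticity directly. Your explicit computation of $\gamma_1,\gamma_2$ via the eigenvector $(\sqrt{m},\sqrt{n})^T$ and the trace simply fills in details the paper leaves as ``the result follows easily.''
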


\begin{proof} A direct verification shows that the matrix $D$ is doubly stochastic. Now Applying
 Theorem 2.3 with $k=2,$ $n_1=m$ and  $n_2=n,$ and taking
 $A_1= T_1,$ $A_2=T_2,$  $\rho_{12}=\rho_{21}=\rho,$
  $\rho_{11}=\alpha $ and $\rho_{22}=\alpha+\rho\frac{n-m}{\sqrt{mn}}$ in the matrix $B$ of $(3)$
  and then the result follows easily.
\end{proof}\\
We conclude this section by noting that in the preceding two
theorems if $T_1$ and $T_2$ are symmetric then $D$ is necessarily
symmetric. Again the preceding two results are useful in the study of the inverse
eigenvalue problem for symmetric doubly stochastic matrices (see the techniques used in
\cite{mourad}).

\section{Application to graph theory}

In this section, we present some applications of the results
of Section 2 in graph theory. The first one deals with graph
spectra which has been an object of study for a while (see~\cite{br,cv}). The second
application is concerned with graph energy where the problem of
characterizing the set of positive numbers that can occur as
energy of a certain graph has been an active area of research in
recent years (see~\cite{ba,gu,nik}). First let us introduce some more notation. A simple
graph $G$ is a pair of sets $(V(G);E(G))$ such that $V$ is a
nonempty finite set of $n$ vertices and $E$ is the set of $m$
edges with no loops nor multiple edges. The adjacency matrix of a
graph $G$ will be denoted by $A(G)$ and its eigenvalues
$\lambda_1,...\lambda_n$ form the spectrum of $G$ which will be
denoted by $\sigma(G).$ The energy of a graph $G$ is defined as
$E(G)=\sum_{i=1}^{n}|\lambda_i|$ where $\lambda_1,...\lambda_n$
are the eigenvalues of $G.$ More generally, the energy of any
square matrix is defined analogously.

 The complete $k$-partite
graph $K_{n_1,n_2,...,n_k}$ has vertices partitioned into $k$
subsets of $n_1, n_2,...,n_k$ elements each, and two vertices are
adjacent if and if only if they are in different subsets in the
partition. For our purposes, we need the following well-known results
 (see~\cite{ba} for example).
\begin{theorem}
Let $A$ and $B$ be the adjacency matrices of the graphs $G$ and
$H$ respectively. Then $G$ is isomorphic to $H$ if and only if
$B=PAP^T$ for some permutation matrix $P.$
\end{theorem}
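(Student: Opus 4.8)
The plan is to translate the combinatorial notion of graph isomorphism into the stated algebraic condition by identifying permutation matrices with bijections of the vertex sets. First I would fix labelings $v_1,\dots,v_n$ of $V(G)$ and $w_1,\dots,w_n$ of $V(H)$, so that $A_{ij}=1$ precisely when $\{v_i,v_j\}\in E(G)$, and likewise $B_{ij}=1$ precisely when $\{w_i,w_j\}\in E(H)$. I would then recall the standard fact that a permutation matrix $P$ encodes a permutation $\pi$ of $\{1,\dots,n\}$ via the convention $P_{ij}=1 \iff i=\pi(j)$, and compute the entries of the conjugate $PAP^T$. A direct calculation gives $(PAP^T)_{\pi(i)\pi(j)}=A_{ij}$, so that conjugating by $P$ amounts precisely to relabeling both row and column indices by $\pi$. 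This single identity is the hinge on which both directions turn.

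For the forward implication, assuming $G$ is isomorphic to $H$ via a bijection $\phi\colon V(G)\to V(H)$, I would let $\pi$ be the permutation determined by $\phi(v_i)=w_{\pi(i)}$ and take $P$ to be its permutation matrix. Adjacency preservation then yields the chain of equivalences $A_{ij}=1 \iff \{v_i,v_j\}\in E(G) \iff \{\phi(v_i),\phi(v_j)\}\in E(H) \iff B_{\pi(i)\pi(j)}=1$, which combined with the entry formula for $PAP^T$ gives $B=PAP^T$ entrywise.

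For the converse, given $B=PAP^T$ with $P$ the matrix of a permutation $\pi$, I would define $\phi\colon V(G)\to V(H)$ by $\phi(v_i)=w_{\pi(i)}$; this is a bijection since $\pi$ is one. Running the same equivalences in reverse shows $\{v_i,v_j\}\in E(G)$ if and only if $\{\phi(v_i),\phi(v_j)\}\in E(H)$, so $\phi$ is a graph isomorphism.

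The only delicate point, and the step I would treat most carefully, is pinning down the index convention for $P$ and verifying the identity $(PAP^T)_{\pi(i)\pi(j)}=A_{ij}$; once this bookkeeping is fixed, both directions collapse to the single observation that $A$ and $B$ record the same adjacency data up to the relabeling $\pi$. There is no genuine structural obstacle here, so the argument is essentially a careful unwinding of the definitions of adjacency matrix, permutation matrix, and graph isomorphism.
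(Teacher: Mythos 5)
Your proof is correct: the entry identity $(PAP^T)_{\pi(i)\pi(j)}=A_{ij}$ is verified properly under your index convention, and both directions follow cleanly from it. Note that the paper itself gives no proof of this statement --- it is quoted as a well-known fact with a reference to Bapat's book --- and your argument is precisely the standard one found there, so there is nothing to compare beyond saying your write-up fills in a proof the paper omits.
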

\begin{theorem}
Let $G$ be  a $d$-regular graph with $n$ vertices. Then the
spectral radius of $G$ equals $d$ and it is an eigenvalue of $G$
with corresponding unit eigenvector equals to
$e_{n}=\frac{1}{\sqrt{n}}(1,1,...,1)^T.$
\end{theorem}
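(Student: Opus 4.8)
The plan is to split the statement into two parts: first, that $d$ is an eigenvalue of $A(G)$ with unit eigenvector $e_n$, and second, that no eigenvalue of $A(G)$ exceeds $d$ in modulus, so that $d$ is in fact the spectral radius.

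For the first part, I would argue directly from the regularity hypothesis. Since $G$ is $d$-regular, every vertex is incident to exactly $d$ edges, so every row of the adjacency matrix $A(G)$ contains exactly $d$ entries equal to $1$ and the remaining entries equal to $0$; that is, each row sum equals $d$. Writing $e=(1,1,\dots,1)^T$, this says precisely that $A(G)e=d\,e$. Normalizing by $\left\Vert e\right\Vert=\sqrt{n}$ gives $A(G)e_n=d\,e_n$ with $e_n=\frac{1}{\sqrt{n}}(1,\dots,1)^T$, so $d$ is an eigenvalue of $G$ with the claimed unit eigenvector.

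For the second part, I would bound the modulus of every eigenvalue by $d$. The cleanest route uses that $A(G)$ is a symmetric nonnegative matrix with zero diagonal (there are no loops) and constant row sum $d$. By Gershgorin's circle theorem, every eigenvalue lies in the union of the disks centered at the diagonal entries (all equal to $0$) with radii equal to the off-diagonal row sums (all equal to $d$); hence $|\lambda|\le d$ for every eigenvalue $\lambda$ of $A(G)$. Combined with the first part, this shows that the spectral radius of $G$ equals $d$. Alternatively, since $A(G)$ is nonnegative and $e_n$ is a positive eigenvector, one may invoke the Perron-Frobenius theorem recalled in the introduction: a nonnegative matrix possessing a positive eigenvector must have the associated eigenvalue as its Perron root, and the Perron root dominates the modulus of all eigenvalues, which again yields spectral radius $d$.

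I expect no serious obstacle here, as the result is classical; the only points requiring a little care are ensuring the bound $|\lambda|\le d$ holds for \emph{all} eigenvalues rather than merely exhibiting $d$ as one eigenvalue, and covering the degenerate case $d=0$, in which $G$ is edgeless, $A(G)$ is the zero matrix, and the statement holds trivially with $e_n$ an eigenvector for the eigenvalue $0$.
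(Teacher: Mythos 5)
Your proof is correct: the row-sum computation shows $A(G)e_n = d\,e_n$, and either Gershgorin (zero diagonal, off-diagonal row sums $d$) or Perron--Frobenius with the positive eigenvector $e_n$ gives $|\lambda|\le d$ for every eigenvalue, so $d$ is the spectral radius. The paper states this theorem as a well-known fact with only a citation and gives no proof of its own, so there is nothing to compare against; your argument is the standard one that the cited reference relies on, and your attention to the edgeless case $d=0$ is a harmless but welcome bit of care.
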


For $j = 1, 2, . . . ,k,$ let $A_j$ be the adjacency matrix of a
$d_j$-regular graph $G_j.$ Let $\widehat{G}$ be {\it graph join} of $G_1,G_2,...,G_k$ i.e. the graph obtained
from $G_1,G_2,...,G_k$ by connecting each vertex of $G_i$ to every
vertex of $G_j$ for all $1\leq i, j \leq k$ with $i\neq j.$ Now
taking $\rho_{jj}=0$ for all $j = 1, 2, . . . ,k$ and
$\rho_{ij}=\sqrt{n_in_j}$ for all $1\leq i,j \leq k$ with $i\neq
j$ in the matrix $B$ of (3), then the matrix $B$ has as diagonal
blocks $A_1,A_2,...,A_k$ and each of its remaining entries is $1.$
In other words,  $B$ becomes the adjacency matrix of the graph
$\widehat{G}$ and the diagonal entries of the matrix $\widehat{B}$
are $d_1,d_2,...,d_k$ and all the remaining entries of
$\widehat{B}$ are equal to $\sqrt{n_in_j}.$ Thus in view of
Theorem 2.3 and Theorem 5.2, we have the following.
\begin{theorem}
For each $j = 1, 2, . . . ,k,$ let $G_j$ be a $d_j$-regular graph
with $n_j$ vertices and spectrum $\sigma(G_j).$  Let $\widehat{G}$
be the graph obtained from $G_1,G_2,...,G_k$ by connecting each
vertex of $G_i$ to every vertex of $G_j$ for all $1\leq i, j \leq
k$ with $i\neq j.$ In addition, let $\rho_{jj}=0$ for all $1\leq j
\leq k$ and let $\rho_{ij}=\sqrt{n_in_j}$ for all $1\leq i, j \leq
k$ with $i\neq j$ in the matrix $B$ of $(3)$ as well as in
$\widehat{B}$ of $(4).$ Then the spectrum of $\widehat{G}$ is
given by:
$$\sigma(\widehat{G})=\overset{k}
{\underset{j=1}{\bigcup}}(\sigma(G_{j})\backslash\{d_{j}\})\bigcup\sigma(\widehat{B}),$$
and its energy is given by:
$$E(\widehat{G})=\sum_{j=1}^{k}(E(G_{j})-d_j)+E(\widehat{B}).$$
\end{theorem}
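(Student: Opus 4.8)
The plan is to observe that, with the parameter choices stated, the block matrix $B$ of $(3)$ is literally the adjacency matrix of $\widehat{G}$, so that the spectrum follows at once from Theorem 2.3 and the energy formula from a bookkeeping of absolute values. First I would set $A_j=A(G_j)$ for each $j$. Since $G_j$ is $d_j$-regular on $n_j$ vertices, Theorem 5.2 gives that $d_j$ is an eigenvalue of $A_j$ with unit eigenvector $u_j=e_{n_j}=\frac{1}{\sqrt{n_j}}(1,\ldots,1)^T$; accordingly I take $\lambda_{1j}=d_j$ and $u_j=e_{n_j}$ in the notation of $(3)$ and $(4)$.

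With $\rho_{jj}=0$ the diagonal blocks of $B$ are just $A_j=A(G_j)$, while for $i\neq j$ the choice $\rho_{ij}=\sqrt{n_i n_j}$ makes the off-diagonal block equal to
$$\rho_{ij}\,u_i u_j^{T}=\sqrt{n_i n_j}\,e_{n_i}e_{n_j}^{T}=\sqrt{n_i n_j}\cdot\frac{1}{\sqrt{n_i n_j}}(1,\ldots,1)^{T}(1,\ldots,1)=(1,\ldots,1)^{T}(1,\ldots,1),$$
the $n_i\times n_j$ all-ones matrix. This is exactly the block recording the edges of the join that connect every vertex of $G_i$ to every vertex of $G_j$, so $B=A(\widehat{G})$. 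I would then apply Theorem 2.3 (equivalently its symmetric Corollary 2.4, legitimate here because each $A_j$ is symmetric and $\rho_{ij}=\sqrt{n_i n_j}=\rho_{ji}$): the eigenvalues of $B$ are the numbers $\lambda_{2j},\ldots,\lambda_{n_j j}$ for $j=1,\ldots,k$ — that is, the eigenvalues of each $G_j$ with a single copy of the Perron value $d_j=\lambda_{1j}$ deleted — together with the eigenvalues $\gamma_1,\ldots,\gamma_k$ of $\widehat{B}$. Collecting these yields $\sigma(\widehat{G})=\bigcup_{j=1}^{k}\bigl(\sigma(G_j)\setminus\{d_j\}\bigr)\cup\sigma(\widehat{B})$, where $\setminus\{d_j\}$ is read as the removal of the one eigenvalue $\lambda_{1j}$.

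For the energy I would simply split the sum of moduli along this partition of the spectrum. Because $d_j\ge 0$ is the spectral radius of $G_j$, we have $|\lambda_{1j}|=d_j$, hence $\sum_{i=2}^{n_j}|\lambda_{ij}|=E(G_j)-d_j$; summing over $j$ and adding the contribution $\sum_{l}|\gamma_l|=E(\widehat{B})$ from the $\gamma$'s gives $E(\widehat{G})=\sum_{j=1}^{k}\bigl(E(G_j)-d_j\bigr)+E(\widehat{B})$.

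The step I expect to demand the most care is the first one: confirming that each block $\rho_{ij}\,u_i u_j^{T}$ collapses to an all-ones block and that $B$ coincides \emph{on the nose} with $A(\widehat{G})$. This hinges on the precise normalization $u_j=e_{n_j}$ — which is where $d_j$-regularity, via Theorem 5.2, is indispensable — and on the matching choice $\rho_{ij}=\sqrt{n_i n_j}$. Everything after that is a routine application of Theorem 2.3 combined with the elementary identity $|d_j|=d_j$.
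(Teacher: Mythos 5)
Your proposal is correct and follows exactly the paper's own route: the paper likewise observes (in the paragraph preceding the theorem) that with $\rho_{jj}=0$ and $\rho_{ij}=\sqrt{n_in_j}$ the off-diagonal blocks $\rho_{ij}u_iu_j^T$ collapse to all-ones blocks so that $B=A(\widehat{G})$ and $\widehat{B}$ has diagonal $d_1,\ldots,d_k$ and off-diagonal entries $\sqrt{n_in_j}$, and then invokes Theorems 2.3 and 5.2. Your extra care about the normalization $u_j=e_{n_j}$ and the splitting of the energy sum using $|d_j|=d_j$ is exactly the right bookkeeping.
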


Now if for each $j = 1, 2, . . . ,k,$  $G_j$ is a graph with $n_j$
vertices and with no edges i.e. $A_j$ is the zero matrix of order
$n_j,$ then clearly $\widehat{G}=K_{n_1,n_2,...,n_k}.$ Thus we
have the following result.
\begin{corollary}
Consider the complete $k$-partite graph $K_{n_1,n_2,...,n_k}$ for
some positive integers $n_1,n_2,...,n_k$ and let $X=(x_{ij})$ be
the $k\times k$ matrix whose diagonal entries are all zeroes and
whose off-diagonal entries are $x_{ij}=\sqrt{n_in_j}.$   Then
$\sigma(K_{n_1,n_2,...,n_k})=\sigma(X)\bigcup\{0,...,0\},$ and
hence $E(K_{n_1,n_2,...,n_k})=E(X).$
\end{corollary}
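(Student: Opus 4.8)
The plan is to obtain this statement as the special case of the preceding theorem (the graph-join spectrum and energy formula) in which every constituent graph $G_j$ is taken to be the edgeless graph on $n_j$ vertices. First I would record the two structural identifications that the preceding discussion already provides: an edgeless graph on $n_j$ vertices is $0$-regular, so its adjacency matrix $A_j$ is the zero matrix of order $n_j$ with $d_j=0$, and the join $\widehat{G}$ of $k$ such edgeless graphs is precisely $K_{n_1,n_2,\ldots,n_k}$. Thus the corollary will follow by simply reading off what the preceding theorem says for this input.

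Next I would compute the two ingredients appearing on the right-hand side of that theorem for this choice. Since $A_j=0$, its spectrum is $\sigma(G_j)=\{0,0,\ldots,0\}$ with $n_j$ copies, so $\sigma(G_j)\setminus\{d_j\}=\sigma(G_j)\setminus\{0\}$ consists of $n_j-1$ zeros; taking the union over $j$ contributes exactly $\sum_{j=1}^{k}(n_j-1)=n-k$ zeros, where $n=n_1+\cdots+n_k$. The decisive observation is that with $\rho_{jj}=0$ and $\rho_{ij}=\sqrt{n_in_j}$ the matrix $\widehat{B}$ of $(4)$ has diagonal entries $\lambda_{1j}+\rho_{jj}=d_j+0=0$ and off-diagonal entries $\sqrt{n_in_j}$, so $\widehat{B}$ coincides exactly with the matrix $X$ of the statement. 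Hence $\sigma(\widehat{B})=\sigma(X)$, and the spectral part of the preceding theorem reads $\sigma(K_{n_1,\ldots,n_k})=\sigma(X)\cup\{0,\ldots,0\}$ with $n-k$ trailing zeros, as claimed.

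For the energy I would feed the same data into the formula $E(\widehat{G})=\sum_{j=1}^{k}(E(G_{j})-d_j)+E(\widehat{B})$. Because each $G_j$ is edgeless, $E(G_j)=\sum|0|=0$ and $d_j=0$, so every summand vanishes and $E(K_{n_1,\ldots,n_k})=E(\widehat{B})=E(X)$. The same conclusion also follows directly from the spectral description, since the extra zero eigenvalues contribute nothing to $\sum_i|\lambda_i|$.

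There is no serious obstacle here: the entire content is the verification of the two identifications $\widehat{G}=K_{n_1,\ldots,n_k}$ and $\widehat{B}=X$, together with the bookkeeping of the multiplicity of the zero eigenvalue. The only point that warrants care is confirming that the diagonal of $\widehat{B}$ is genuinely zero, which relies on the edgeless graph being $0$-regular so that $\lambda_{1j}=d_j=0$, and that the $n-k$ retained zeros are counted consistently so that the total $k+(n-k)=n$ matches the order of $K_{n_1,\ldots,n_k}$.
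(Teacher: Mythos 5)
Your proposal is correct and follows exactly the paper's route: the paper obtains this corollary by specializing Theorem 5.3 to the case where each $G_j$ is the edgeless ($0$-regular) graph on $n_j$ vertices, so that $\widehat{G}=K_{n_1,\ldots,n_k}$ and $\widehat{B}=X$. Your additional bookkeeping of the $n-k$ zero eigenvalues is a harmless elaboration of the same argument.
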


 Let $G_1$ be a regular graph with $n$ vertices and let $G_2,G_3,...,G_k$ be $k-1$
  graphs that are all isomorphic to $G_1,$ and
define the graph $\widehat{G}$ as earlier. Then in view of Theorem
5.1, we have the following.
\begin{theorem}
 let $G_1$ be a $d$-regular graph with $n$ vertices and $A$ be its adjacency matrix. Let $\sigma(G_1)=(d;\lambda_2,...,\lambda_n)$ with
 $d\geq\lambda_2\geq ...\geq \lambda_n.$ Let
 $G_2,...,G_k$ be $k-1$ graphs which are all isomorphic to $G_1$.  In addition, let $\widehat{G}$ be the graph
  obtained from $G_1,G_2,...,G_k$ by connecting each vertex of $G_i$ to every vertex of $G_j$ for all
 $1\leq i, j \leq k$ with $i\neq j.$ Then the spectrum of
 $\widehat{G}$ is the $nk$-tuple:
 $$\sigma(\widehat{G})=\left( \underbrace{d+n(k-1),\lambda_2,...,\lambda_n},\underbrace{d-n,\lambda_2,...,
 \lambda_n},...,\underbrace{d-n,\lambda_2,...,\lambda_n}\right)$$
 and then its energy is $$E(\widehat{G})=d+n(k-1)+(k-1)\left\vert d-n\right\vert +\sum_{j=2}^{k}k |\lambda_j|.$$
\end{theorem}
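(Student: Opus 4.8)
The plan is to derive this as a direct specialization of Theorem 5.5, since the graph $\widehat{G}$ defined here is precisely the join constructed there. First I would check that the hypotheses of Theorem 5.5 hold in this more restrictive setting. Because $G_2,\ldots,G_k$ are all isomorphic to $G_1$, Theorem 5.1 gives $A(G_j)=P_jAP_j^T$ for permutation matrices $P_j$, and similar matrices have the same characteristic polynomial, so every $G_j$ is $d$-regular on $n$ vertices with $\sigma(G_j)=(d;\lambda_2,\ldots,\lambda_n)$. In particular $d_j=d$ and $n_j=n$ for all $j$, and by Theorem 5.2 the common Perron eigenvector is $e_n=\frac{1}{\sqrt{n}}(1,\ldots,1)^T$, which is exactly the unit eigenvector used in the construction of the matrix $B$ of $(3)$.

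Applying Theorem 5.5 then yields $\sigma(\widehat{G})=\bigcup_{j=1}^{k}\bigl(\sigma(G_j)\setminus\{d_j\}\bigr)\cup\sigma(\widehat{B})$, where the union is understood as a multiset union. Each factor $\sigma(G_j)\setminus\{d_j\}$ removes the single Perron eigenvalue $d$ and contributes one copy of the multiset $\{\lambda_2,\ldots,\lambda_n\}$; as there are $k$ graphs, this accounts for exactly the $k$ copies of $\lambda_2,\ldots,\lambda_n$ appearing in the claimed $nk$-tuple.

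The only genuine computation is $\sigma(\widehat{B})$. With $\rho_{jj}=0$ and $\rho_{ij}=\sqrt{n_in_j}=n$ for $i\neq j$, the matrix $\widehat{B}$ of $(4)$ has every diagonal entry equal to $d$ and every off-diagonal entry equal to $n$; that is, $\widehat{B}=(d-n)I_k+nJ_k$, where $J_k$ denotes the $k\times k$ all-ones matrix. Since $J_k$ has eigenvalue $k$ (simple) and eigenvalue $0$ with multiplicity $k-1$, the spectrum of $\widehat{B}$ is $d-n+nk=d+n(k-1)$ (simple) together with $d-n$ repeated $k-1$ times. Inserting this into the union produces precisely the stated spectrum, the top eigenvalue $d+n(k-1)$ being placed in the first block and one copy of $d-n$ in each of the remaining $k-1$ blocks.

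For the energy I would simply sum absolute values over the spectrum just obtained: the entry $d+n(k-1)$ is positive and contributes itself, the $k-1$ copies of $d-n$ contribute $(k-1)\lvert d-n\rvert$, and each $\lambda_j$ with $2\le j\le n$ occurs $k$ times, contributing $k\sum_{j=2}^{n}\lvert\lambda_j\rvert$, giving the value of $E(\widehat{G})$. I do not expect a real obstacle in this argument; the one step requiring any care is the eigenvalue computation of $\widehat{B}$, where recognizing it as the rank-one update $(d-n)I_k+nJ_k$ makes the spectrum transparent, together with the routine bookkeeping needed to confirm that the multiplicities assemble into the full $nk$-tuple.
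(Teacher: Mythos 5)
Your proof is correct and takes essentially the same route as the paper: identify the adjacency matrix of $\widehat{G}$ with the matrix $B$ of $(3)$ via the permutation-similarity $A(G_j)=P_jAP_j^T$, and reduce everything to the spectrum of $\widehat{B}=(d-n)I_k+nJ_k$. (Two small remarks: the join theorem you cite is Theorem 5.3 in the paper, not 5.5, and you supply the explicit eigenvalue computation for $\widehat{B}$ that the paper leaves implicit, while also correctly reading the energy sum as running over $j=2,\ldots,n$ rather than the paper's misprinted $j=2,\ldots,k$.)
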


\begin{proof} It suffices to notice that the adjacency matrix of
the graph $\widehat{G}$ is the matrix $B$ of (3) whose diagonal
blocks are $A, P_2AP_2^T,...,P_kAP_k^T$ for some permutation matrices
$P_2,...,P_k$ and all the remaining entries are equal to $1.$
Then clearly $\widehat{B}$ is the $k\times k$ matrix whose each diagonal
entry is  $d$ and whose off-diagonal entries are all equal
to $\sqrt{n^2}=n,$  and the proof is complete.
\end{proof}\\

As a conclusion, we obtain the following.
\begin{corollary}
Let $G_1$ be a graph with $n$ vertices and no edges and let
$G_2,...,G_k$ be $k-1$ graphs which are all isomorphic to $G_1.$
Suppose that $\widehat{G}$ is the graph
  obtained from $G_1,G_2,...,G_k$ by connecting each vertex of $G_i$ to every vertex of $G_j$ for all
 $1\leq i, j \leq k$ with $i\neq j,$
then $\widehat{G}= K_{n,n,...,n}$ $($with $n$ repeated $k$
times$)$ and $E(\widehat{G})=2n(k-1).$
\end{corollary}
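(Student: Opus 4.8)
The plan is to specialize Theorem 5.6 to the case where each $G_j$ is the empty graph on $n$ vertices and then read off the energy. First I would observe that if $G_1$ is a graph on $n$ vertices with no edges, then its adjacency matrix is the zero matrix of order $n$, so $G_1$ is $0$-regular, that is $d=0$, and its spectrum is $\sigma(G_1)=(0;0,\ldots,0)$, i.e. $\lambda_2=\cdots=\lambda_n=0$. Since $G_2,\ldots,G_k$ are all isomorphic to $G_1$ by hypothesis, each of them is likewise the empty graph on $n$ vertices. Thus all the hypotheses of Theorem 5.6 are met with $d=0$ and $\lambda_2=\cdots=\lambda_n=0$.

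Next I would identify the graph $\widehat{G}$. By the construction in Theorem 5.6, $\widehat{G}$ is obtained from the $k$ empty graphs on $n$ vertices by joining every vertex in one part to every vertex in a different part, while leaving the parts internally edgeless. This is exactly the defining property of the complete $k$-partite graph with all parts of size $n$, so $\widehat{G}=K_{n,n,\ldots,n}$ with $n$ repeated $k$ times. (Alternatively this is precisely the instance of Corollary 5.5 in which all the $n_j$ equal $n$.)

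Finally I would substitute $d=0$ and $\lambda_2=\cdots=\lambda_n=0$ into the energy formula of Theorem 5.6. The spectrum there becomes
$$
\sigma(\widehat{G})=\left(\underbrace{n(k-1),0,\ldots,0},\underbrace{-n,0,\ldots,0},\ldots,\underbrace{-n,0,\ldots,0}\right),
$$
and the energy formula $E(\widehat{G})=d+n(k-1)+(k-1)\lvert d-n\rvert+\sum_{j=2}^{k}k\lvert\lambda_j\rvert$ collapses: the first term is $0$, the second is $n(k-1)$, the third is $(k-1)\lvert 0-n\rvert=n(k-1)$ since $n\geq 0$, and the last sum vanishes because every $\lambda_j=0$. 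Adding these gives $E(\widehat{G})=n(k-1)+n(k-1)=2n(k-1)$, as claimed.

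There is essentially no hard step here; the result is a direct specialization. The only point that needs a moment's care is the bookkeeping in the energy formula, in particular recognizing that the single large eigenvalue $n(k-1)$ and the $k-1$ copies of the eigenvalue $-n$ each contribute $n(k-1)$ in absolute value, and that all remaining eigenvalues are $0$. I would double-check that the term $(k-1)\lvert d-n\rvert$ is correctly read from Theorem 5.6 with $d=0$ so that no spurious factor is introduced.
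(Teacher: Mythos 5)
Your proposal is correct and is essentially the paper's own proof: the paper simply applies the preceding theorem (the one on joins of $k$ isomorphic regular graphs) with $d=\lambda_2=\cdots=\lambda_n=0$, which is exactly your specialization, and your substitution into the energy formula giving $n(k-1)+(k-1)\lvert -n\rvert=2n(k-1)$ is the intended computation. The only nitpick is a harmless off-by-one in your references (the theorem you invoke is the one immediately preceding this corollary, and the complete multipartite identification is the earlier corollary on $K_{n_1,\ldots,n_k}$).
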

\begin{proof} It suffices to apply the preceding theorem with
$d=\lambda_2=...=\lambda_n=0.$
\end{proof}\\

 In a similar way to what was mentioned earlier in Section 3, the choice of the nonnegative numbers  $\rho_{ij}$
  plays an important role in constructing new graphs from old ones and in the study of
   the relations between the spectra and energies of the old and new graphs. More precisely,
taking $\rho_{jj}=0$ for all $j = 1, 2, . . . ,k$ and some of the
$\rho_{ij}=0$ for $i\neq j$ in the matrix $B$ of (3), we can
obtain many new consequences that are similar in nature to the preceding
results. More specifically, these results deal with constructing
other types of graphs and study their spectra and their energies
in the same way we defined $\widehat{G}$ and studied its spectrum
$\sigma(\widehat{G})$ and its energy $E(\widehat{G}).$ As an
illustration, we consider the case where $\rho_{ij}=0$ for all $i=j$ and for all
$|i-j|> 1$ and $\rho_{jj+1}=\sqrt{n_jn_{j+1}}$ otherwise. Then
 in this case, the matrix $B$ becomes the matrix $C$ of (1) and
then $\widehat{B}=\widehat{C}.$ Thus we clearly obtain the
following result which appears in \cite{iv}.
\begin{theorem}(\cite{iv})
For each $j = 1, 2, . . . ,k,$ let $G_j$ be a $d_j$-regular graph
with $n_j$ vertices and spectrum $\sigma(G_j).$  If $\tilde{G}$ is
the graph obtained from $G_1,G_2,...,G_k$ by connecting each
vertex of $G_j$ to every vertex of $G_{j+1},$ then
$$E(\tilde{G})=\sum_{j=1}^{k}(E(G_{j})-d_j)+E(\widehat{C}).$$
\end{theorem}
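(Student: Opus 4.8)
The plan is to recognize the adjacency matrix of $\tilde{G}$ as a specialization of the matrix $C$ in $(1)$ and then read off both its spectrum and its energy from Theorem 2.3, exactly as was done for $\widehat{G}$ in Theorem 5.3. The whole argument is a transcription of that computation to the ``path-like'' (tridiagonal-by-blocks) join, so no genuinely new difficulty should arise.

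First I would fix the data. For each $j$, since $G_j$ is $d_j$-regular on $n_j$ vertices, Theorem 5.2 gives that its adjacency matrix $A_j$ has Perron eigenvalue $\lambda_{1j}=d_j$ with unit eigenvector $u_j=e_{n_j}=\frac{1}{\sqrt{n_j}}(1,\ldots,1)^T$. With the choice $\rho_{jj}=0$ for all $j$, $\rho_{j,j+1}=\rho_{j+1,j}=\sqrt{n_jn_{j+1}}$, and $\rho_{pq}=0$ whenever $|p-q|>1$, the diagonal blocks of $B$ in $(3)$ reduce to the $A_j$ themselves, while the block linking $G_j$ and $G_{j+1}$ becomes $\sqrt{n_jn_{j+1}}\,u_ju_{j+1}^T=\sqrt{n_jn_{j+1}}\cdot\frac{1}{\sqrt{n_jn_{j+1}}}J_{n_j\times n_{j+1}}=J_{n_j\times n_{j+1}}$, the all-ones matrix. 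This is precisely the off-diagonal block of $A(\tilde{G})$ recording the complete connection between consecutive parts, so $B=C$ is exactly the adjacency matrix of $\tilde{G}$, and correspondingly $\widehat{B}=\widehat{C}$.

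Next I would invoke Theorem 2.3 (in its symmetric form, Corollary 2.4) applied to this $C$. It yields that the spectrum of $\tilde{G}$ consists of the eigenvalues $\lambda_{2j},\ldots,\lambda_{n_jj}$ of each $A_j$ with the Perron value $d_j$ deleted, together with the eigenvalues $\gamma_1,\ldots,\gamma_k$ of $\widehat{C}$; that is,
$$\sigma(\tilde{G})=\overset{k}{\underset{j=1}{\bigcup}}\bigl(\sigma(G_j)\setminus\{d_j\}\bigr)\bigcup\sigma(\widehat{C}).$$

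Finally, the energy follows by summing moduli. Since $d_j$ is the spectral radius of a nonnegative matrix it satisfies $d_j\geq 0$, so $|d_j|=d_j$ and $E(G_j)=d_j+\sum_{i\geq 2}|\lambda_{ij}|$, whence $\sum_{i\geq 2}|\lambda_{ij}|=E(G_j)-d_j$. Adding the contribution $E(\widehat{C})=\sum_{l}|\gamma_l|$ coming from the $\widehat{C}$-block gives
$$E(\tilde{G})=\sum_{j=1}^{k}(E(G_j)-d_j)+E(\widehat{C}),$$
as claimed. The only point requiring care --- and the single place where a sign could go wrong --- is the observation that deleting $d_j$ from the spectrum subtracts exactly $d_j$ from the energy, which is guaranteed by $d_j\geq 0$; everything else is a direct reuse of the Theorem 5.3 bookkeeping.
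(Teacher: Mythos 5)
Your proposal is correct and follows essentially the same route as the paper: the paper likewise obtains Theorem 5.7 by setting $\rho_{jj}=0$, $\rho_{pq}=0$ for $|p-q|>1$, and $\rho_{j,j+1}=\sqrt{n_jn_{j+1}}$ so that $B$ becomes the matrix $C$ of $(1)$ (the adjacency matrix of $\tilde{G}$) and $\widehat{B}=\widehat{C}$, then applies Theorem 2.3 together with Theorem 5.2. Your additional bookkeeping on the energy (using $d_j\geq 0$ to pass from the spectrum to $E(G_j)-d_j$) matches the computation already carried out for Theorem 5.3.
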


By taking all the $G_i$ isomorphic,  we obtain the following.
\begin{corollary}
Let $G_1$ be a $d$-regular graph with $n$ vertices  and spectrum
$\sigma(G_1)=(d;\lambda_2,...,\lambda_n).$  Let $G_2,...,G_k$ be
$k-1$ graphs which are all isomorphic to $G_1.$ If $\tilde{G}$ is
the graph obtained from $G_1,G_2,...,G_k$ by connecting each
vertex of $G_j$ to every vertex of $G_{j+1}$ for all $1\leq  j
\leq k,$ then $$E(\tilde{G})=\sum_{j=1}^{k} \left\vert
d+2n\cos\frac{j\pi}{k+1}\right\vert +\sum_{j=2}^{k}k
|\lambda_j|.$$
\end{corollary}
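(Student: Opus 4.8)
The plan is to obtain this from Theorem 5.7, which already gives $E(\tilde{G}) = \sum_{j=1}^{k}(E(G_j) - d_j) + E(\widehat{C})$, and then to evaluate the two summands separately using the hypothesis that $G_2,\dots,G_k$ are all isomorphic copies of the single $d$-regular graph $G_1$. So the corollary is really just a specialization of Theorem 5.7 in which all the blocks coincide spectrally, together with one classical spectral computation.

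First I would dispose of the sum $\sum_{j=1}^{k}(E(G_j) - d_j)$. Since each $G_j$ is isomorphic to $G_1$, Theorem 5.1 supplies permutation matrices $P_j$ with $A(G_j) = P_j A P_j^T$, so all the $G_j$ share the spectrum $\sigma(G_1) = (d; \lambda_2,\dots,\lambda_n)$; in particular $E(G_j) = E(G_1)$ and $d_j = d$ for every $j$. Because $d$ is the spectral radius (Theorem 5.2) and $d \geq 0$, we have $|d| = d$, hence $E(G_1) - d = \sum_{i=2}^{n}|\lambda_i|$. Summing over the $k$ copies yields $\sum_{j=1}^{k}(E(G_j) - d_j) = k\sum_{i=2}^{n}|\lambda_i|$, which is the second term of the claimed formula.

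The heart of the argument, and the step I expect to carry the only genuine content, is the computation of $E(\widehat{C})$. With $n_j = n$ for all $j$ and coupling constants $\rho_{j,j+1} = \sqrt{n_j n_{j+1}} = n$, and with each diagonal entry of $\widehat{C}$ being the Perron value $d$ of the corresponding block, the matrix $\widehat{C}$ of $(2)$ is the $k\times k$ symmetric tridiagonal Toeplitz matrix with $d$ on the main diagonal and $n$ on the two adjacent diagonals; equivalently $\widehat{C} = d\,I_k + n\,A(P_k)$, where $A(P_k)$ is the adjacency matrix of the path on $k$ vertices. Invoking the well-known spectrum of the path (its eigenvalues being $2\cos\frac{j\pi}{k+1}$ for $j = 1,\dots,k$), the eigenvalues of $\widehat{C}$ are $d + 2n\cos\frac{j\pi}{k+1}$, $j = 1,\dots,k$. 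Taking absolute values and summing gives $E(\widehat{C}) = \sum_{j=1}^{k}\left|d + 2n\cos\frac{j\pi}{k+1}\right|$, the first term of the formula.

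Adding the two contributions produces $E(\tilde{G}) = \sum_{j=1}^{k}\left|d + 2n\cos\frac{j\pi}{k+1}\right| + k\sum_{i=2}^{n}|\lambda_i|$, as required. Everything beyond the tridiagonal eigenvalue formula is pure bookkeeping: the isomorphism hypothesis collapses the block data to a single graph, and the specialization $n_j \equiv n$ turns the off-diagonal couplings into the constant $n$. The only point worth stating carefully is the eigenvalue formula for the tridiagonal Toeplitz (path) matrix; since that formula is classical, the proof is short.
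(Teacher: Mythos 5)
Your proposal is correct and follows essentially the same route as the paper: both reduce to the decomposition $E(\tilde{G})=\sum_{j=1}^{k}(E(G_j)-d_j)+E(\widehat{C})$ of Theorem 5.7, identify $\widehat{C}=dI_k+nA(P_k)$ as a tridiagonal Toeplitz matrix via the specialization $n_j\equiv n$, $\rho_{j,j+1}=n$, and invoke the classical path spectrum $2\cos\frac{j\pi}{k+1}$. The only difference is that you spell out the bookkeeping ($E(G_j)-d_j=\sum_{i=2}^{n}|\lambda_i|$ for each isomorphic copy) that the paper leaves as "the proof can be easily completed."
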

\begin{proof}
Clearly, the adjacency matrix of the graph
$\tilde{G}$ is the matrix $C$ of (1) whose diagonal blocks are $A,
P_2AP_2^T,...,P_kAP_k^T$ for some permutation matrices
$P_2,...,P_k$ and all the blocks above and below the diagonal
blocks have each entry equals to $1.$ Therefore $\widehat{C}$ is
the $k\times k$ tridiagonal matrix whose diagonal entries are all
$d$ and whose entries below and above the diagonal are all equal
to $\sqrt{n^2}=n$ that is $\widehat{C}=dI_k+nA(P_k)$ where $P_k$
is the path of order $k$ and $A(P_k)$ is its adjacency matrix. Now
by \cite{ba} for example,  the eigenvalues of $A(P_k)$ are
$(2\cos\frac{\pi}{k+1},2\cos\frac{2\pi}{k+1},...,2\cos\frac{k\pi}{k+1}).$
Therefore
$\sigma(\widehat{C})=(d+2n\cos\frac{\pi}{k+1},d+2n\cos\frac{2\pi}{k+1},...,d+2n\cos\frac{k\pi}{k+1}),$
and then the proof can be easily completed.
\end{proof}\\
\begin{corollary}
Let $G_1$ be a graph with $n$ vertices and no edges and let
$G_2,...,G_k$ be $k-1$ graphs which are all isomorphic to $G_1.$
If $\tilde{G}$ is the graph obtained from $G_1,G_2,...,G_k$ by
connecting each vertex of $G_j$ to all vertices of $G_{j+1}$ for
all $1\leq  j \leq k,$ then
$E(\tilde{G})=E(P_k)=\sum_{j=1}^{k}\left\vert
2n\cos\frac{j\pi}{k+1}\right\vert.$
\end{corollary}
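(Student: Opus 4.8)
The plan is to derive this statement as the degenerate instance of the preceding corollary in which the common regular graph carries no edges, so that essentially no new work is required beyond a substitution. First I would record the spectral data of the empty graph on $n$ vertices: its adjacency matrix is the zero matrix, hence it is $0$-regular and every one of its eigenvalues is zero. In the notation of the preceding corollary this amounts to $d=0$ and $\lambda_2=\cdots=\lambda_n=0$. Because $G_2,\dots,G_k$ are assumed isomorphic to $G_1$, Theorem 5.1 ensures that they are edgeless as well, so the hypotheses of the preceding corollary hold verbatim and the graph $\tilde G$ constructed there coincides with the one in the present statement.

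Next I would substitute these values into the energy formula furnished by the preceding corollary,
$$E(\tilde{G})=\sum_{j=1}^{k}\left\vert d+2n\cos\frac{j\pi}{k+1}\right\vert+\sum_{j=2}^{k}k|\lambda_j|.$$
Putting $d=0$ removes the shift inside each absolute value, while $\lambda_2=\cdots=\lambda_n=0$ annihilates the second sum, leaving
$$E(\tilde{G})=\sum_{j=1}^{k}\left\vert 2n\cos\frac{j\pi}{k+1}\right\vert,$$
which is the claimed expression.

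Finally I would make the connection with the path explicit. Recall from the proof of the preceding corollary that the reduced $k\times k$ matrix is $\widehat{C}=dI_k+nA(P_k)$, where $A(P_k)$ is the adjacency matrix of the path $P_k$ with eigenvalues $2\cos\frac{j\pi}{k+1}$ for $j=1,\dots,k$. Setting $d=0$ collapses this to $\widehat{C}=nA(P_k)$, so the $k$ nonzero eigenvalues of $\tilde G$ are exactly $n$ times those of $P_k$ and the displayed total is precisely $E(\widehat C)=nE(P_k)$. There is no real obstacle in the argument, since it is a one-line specialization; the only point that deserves care is the scaling factor $n$ carried by $\widehat C=nA(P_k)$, which is what converts $E(P_k)=\sum_{j=1}^{k}\vert 2\cos\frac{j\pi}{k+1}\vert$ into the stated value $\sum_{j=1}^{k}\vert 2n\cos\frac{j\pi}{k+1}\vert$.
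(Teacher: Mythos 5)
Your proof is correct and follows exactly the paper's route: the paper's entire proof of this corollary is ``apply the preceding theorem with $d=\lambda_2=\cdots=\lambda_n=0$,'' which is precisely your specialization. Your additional remark that the reduced matrix is $\widehat{C}=nA(P_k)$, so the displayed sum is really $nE(P_k)$ rather than $E(P_k)$, is a fair observation about the scaling that the paper's own statement glosses over.
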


\begin{proof} Apply the preceding theorem with
$d=\lambda_2=...=\lambda_n=0.$
\end{proof}

\end{document}